\documentclass[12pt,bookmark=true]{article}

\RequirePackage{geometry}
\geometry{twoside,
	paperwidth=210mm,
	paperheight=267mm,
	textheight=622pt,
	textwidth=468pt,
	centering,
	headheight=50pt,
	headsep=12pt,
	footskip=18pt,
	footnotesep=24pt plus 2pt minus 12pt,
	columnsep=2pc
}

\usepackage{graphicx}				

\usepackage{amssymb}
\usepackage{amsmath}
\usepackage{color}
\usepackage{amsthm}
\usepackage{enumerate}
\usepackage{lineno}
\usepackage{indentfirst}
\usepackage{mathrsfs}
\usepackage{amsfonts}
\usepackage{amssymb}
\usepackage{amsthm}
\usepackage{geometry}
\usepackage{hyperref}

\newtheorem{thm}{\noindent Theorem}[section]
\newtheorem{lem}[thm]{\noindent Lemma}

\newtheorem{pro}[thm]{\noindent Proposition}
\newtheorem{de}[thm]{\noindent Definition}
\theoremstyle{definition}{
\newtheorem{remark}[thm]{\noindent Remark}
}
\numberwithin{equation}{section}
\def\R{\mathbb{R}}
\def\O{\mathcal{O}}
\hypersetup{hidelinks}
\begin{document}
	\title{Hierarchical exact controllability for a parabolic equation with Hardy potential}
	\author{
		Haiyang Lin\footnote{Email address: linhaiyang2002@163.com}, Bo You\footnote{Corresponding author; Email address: youb2013@xjtu.edu.cn}
		\\
		{\small School of Mathematics and Statistics, Xi'an Jiaotong University} \\
		\small Xi'an, 710049, P. R. China}
	
	\maketitle
	
\begin{center}
\begin{abstract}
The main objective of this paper is to study the hierarchical exact controllability for a parabolic equation with Hardy potential by Stackelberg-Nash strategy. In linear case, we employ Lax-Milgram theorem to prove the existence of an associated Nash equilibrium pair corresponding to a bi-objective optimal control problem for each leader, which is responsible for an exact controllability property. Then the observability inequality of a coupled parabolic system is established by using global Carleman inequalities, which results in the existence of a leader that drives the controlled system exactly to any prescribed trajectory. In semilinear case, we first prove the well-posedness of the coupled parabolic system to obtain the existence of Nash quasi-equilibrium pair and show that Nash quasi-equilibrium is equivalent to Nash equilibrium. Based on these results, we establish the existence of a leader that drives the controlled system exactly to a prescribed (but arbitrary) trajectory by Leray-Schauder fixed point theorem.
			
\textbf{Keywords}:  Stackelberg-Nash strategy, exact controllability to trajectory, Carleman inequalities, Hardy potential,Leray-Schauder fixed point theorem.

\textbf{Mathematics Subject Classification (2020)} : 35Q93; 49J20; 93A13; 93B05; 93B07; 93C20
		\end{abstract}
	\end{center}
	
	\section{Introduction}
	\noindent  
	In the classical control theory, the problems we usually consider only involve a unique target with a single control, and the predetermined target is to minimize a cost functional in a prescribed family of admissible controls. However, the problems with several different and even contradictory control objectives may be more common in reality. For example, we intend to keep reasonable humidity in some areas of the room during the whole time interval $(0,T),$ and drive the humidity in a room to a desired target at the time $T$ by  humidifier and dehumidifier acting on several small subdomains. To deal with such multi-objective problems, we will make use of Stackelberg-Nash strategy which combines the Stackelberg hierarchical-cooperative strategy \cite{von1934marktform} and non-cooperative optimization techniques proposed by Nash \cite{nash1951non}. The general idea of this strategy is that the leader (the main control) makes the first movement and then the followers (the secondary controls) react optimally to the action of the leader. 
	
	As a precedent, J.  L.  Lions \cite{lions1994hierarchic} has done pioneering works in hierarchic control of PDEs, where he employed Stackelberg strategy and considered two controls (one leader and one follower) in the context of wave PDEs. In \cite{diaz2002neumann,diaz2004approximate}, J.  I.  Di\'{a}z et.al have established the approximate controllability of the systems by using Stackelberg-Nash strategy. The Stackelberg-Nash exact controllability to the trajectories of linear and semilinear parabolic equations have been given in \cite{araruna2020hierarchical,araruna2017new,araruna2015stackelberg,DJOMEGNE2025128799}; the problems with distributed controls was analyzed in \cite{araruna2017new,araruna2015stackelberg}, while \mbox{F.  D.  Araruna \textit{et al.}} \cite{araruna2020hierarchical} have dealt with the problems with both distributed and boundary controls and \mbox{L. Djomegne \textit{et al.}} \cite{DJOMEGNE2025128799} have considered the problem with all controls acting on the boundary. Moreover, \mbox{F.  D.  Araruna \textit{et al.}} have also considered hierarchic control for semilinear wave equations in \cite{araruna2018hierarchic}. For more works on hierarchical controllability of wave equations, we refer the readers to \cite{DEJESUS2016377} and \cite{DETERESA2025103428}. We also refer the readers to \cite{guillen2013approximate,hernandez2016hierarchic} to study the hierarchical controllability of two coupled equations of Stokes systems and parabolic systems.  In addition, the hierarchical controllability of the fourth order parabolic equations was analyzed by F. Li and B. You in \cite{li2025hierarchical}. Also we would like to mention the work \cite{ramos2002nash,ramos2002pointwise} by \mbox{Ramos \textit{et al.}} where they study Nash equilibrium for constraints given by linear parabolic and Burger's equations from the points of theoretical and numerical view.
	
	In this paper, we mainly consider the hierarchic exact controllability of the parabolic equations with singular potentials. More precisely, we focus on the Hardy potential $\frac{\mu}{|x|^2}$ which usually appears in the linearization of standard combustion models (see \cite{bebernes2013mathematical,brezis1997blow,dold1998rate,galaktionov1997continuation,peral1995stability}) and the context of quantum mechanics (see \cite{baras1984remarks,de2004bound}). In 1984, Baras and Goldstein \cite{baras1984heat} studied the heat equations with such inverse-square singular potentials and proved the existence as well as the non-existence of positive solutions depend on the value of the parameter $\mu.$ Later,  E.  Zuazua et.al \cite{vazquez2000hardy} complemented the well-posedness results of \cite{baras1984heat,baras1984remarks} and precisely described the functional spaces in which such problems are well-posed, especially for the critical case. Furthermore,  E.  Zuazua et.al \cite{vancostenoble2008null} have consider the controllability properties of such equations by obtainning Carleman inequalities for one-dimensional problems with singular potentials. Moreover, S. Ervedoza \cite{ervedoza2008control} extended the Carleman estimates for one-dimensional case to the N-dimensional case and deduced a null controllability result for the same problem with control supported in any nonempty subdomain. We also refer the readers to \cite{vancostenoble2011lipschitz} in which the author has considered the inverse source problems for such parabolic equations. 
	
	Let $N\ge 3$ be given and $\Omega\subset\R^N$ is a bounded domain with boundary $\Gamma$ of class $C^2$ such that $0\in \Omega.$ Given $T>0,$ we will set $Q:=\Omega \times (0,T)$ and $\Sigma:=\Gamma\times (0,T).$ Assume that $\O, \O_1, \O_2\subset\Omega$ are three small open nonempty sets. In this paper, we will analyze the hierarchic exact controllability of the following system:
	\begin{align}\label{eq1}
		\begin{cases}
			y_t-\Delta y-\frac{\mu}{|x|^2}y=F(y)+f1_{\O}+v^11_{\O_1}+v^21_{\O_2},&(x,t)\in Q,\\
			y(x,t)=0,&(x,t) \in\Sigma,\\
			y(x,0)=y_0(x), &x\in\Omega,
		\end{cases}
	\end{align}
	where $f\in L^2(\O\times(0,T))$ is the leader and $v^1 \in L^2(\O_1\times(0,T)),v^2\in L^2(\O_2\times(0,T))$ are the followers, $y_0\in L^2(\Omega)$ is given, $F\in C^1(\R)\cap W^{1,\infty}(\R),$ $0\le\mu\le\mu^*(N)$, here $\mu^*(N)=\frac{(N-2)^2}{4}$ is the optimal constant in the following Hardy inequality (see for example \cite{evans2022partial}):
	\[\mu^*(N)\int_{\Omega}\frac{z^2}{|x|^2}\,dx\le\int_{\Omega}|\nabla z|^2\,dx,\quad \forall z\in H^1_0(\Omega).\]
	The notation $1_A$ indicates the characteristic function of the set $A$.
	
	
	Let $\O_{1,d}$ and $\O_{2,d}$ be nonempty open subsets with $\O\cap \O_{i,d}\neq \emptyset$ and $0\notin \overline{\O}_{i,d}$. We define the secondary cost functionals by
	\begin{align}
		J_i(f;v^1,v^2)=\frac{1}{2}\iint_{\O_{i,d}\times (0,T)}|y-y_{i,d}|^2\,dxdt+\frac{\alpha_i}{2}\iint_{\O_{i}\times (0,T)}|v^i|^2\,dxdt,\quad i=1,2,\label{sc1}
	\end{align}
	where $y_{i,d}\in L^2(\O_{i,d}\times (0,T))$ are given functions and $\alpha_i$ are positive constants. Let us also introduce the main cost functional:
	\begin{align*}
		J(f)=\frac{1}{2}\iint_{\O\times (0,T)}|f|^2\,dxdt.
	\end{align*}
	
	The control process can be divided into two steps. First of all, for each choice of the leader $f$, we try to find a Nash equilibrium pair $(\bar{v}^1(f),\bar{v}^2(f))$ for the cost functionals $J_i\ (i=1,2)$. That is, for any fixed $f\in L^2(\O\times(0,T))$, we would like to prove that there exist $\bar{v}^1\in L^2(\O_{1}\times (0,T))$ and $\bar{v}^2 \in L^2(\O_{2}\times (0,T))$, depending
	on $f$, satisfying simultaneously
	\begin{equation}\label{Nash1}
		\begin{aligned}
			&J_1(f;\bar{v}^1,\bar{v}^2)\le J_1(f;v^1,\bar{v}^2),\quad \forall v^1\in L^2(\O_{1}\times (0,T)),\\ &J_2(f;\bar{v}^1,\bar{v}^2)\le J_1(f;\bar{v}^1,v^2),\quad \forall v^2\in L^2(\O_{2}\times (0,T)).
		\end{aligned}
	\end{equation}
	Second,  let $\bar{y}$ be the solution of the following problem
	\begin{align}
		\begin{cases}
			\bar{y}_t-\Delta \bar{y} -\frac{\mu}{|x|^2}\bar{y}=F(\bar y) ,&(x,t)\in  Q,\\
			\bar{y}(x,t)=0,&(x,t) \in \Sigma,\\
			\bar{y}(x,0)=\bar{y}_0(x),& x\in \Omega,
		\end{cases}\label{tr}
	\end{align}
	where $\bar{y}_0\in L^2(\Omega)$. After proving that there exists at least one Nash equilibrium for each $f$, we would like to look for a leader $\hat{f}$ such that 
	\begin{align}\label{op}
		J(\hat{f})=\min_{f}J(f)
	\end{align}
	subject to the exact controllability condition
	\begin{align}\label{exa}
		y(x,T)=\bar y(x,T)\quad \text{for \textit{a.e.}}\  x\in \Omega.
	\end{align}
The main difficulties and novelties of this paper are summarized as follows.
\begin{enumerate}[\rm{(}1\rm{)}]
\item Since there are three controls appeared in problem \eqref{eq1}, such that the investigation of the hierarchical exact controllability of problem \eqref{eq1} is reduced to the study of the null controllability of a coupled system. Thus,  we need to establish an observability inequality for a coupled system of parabolic equations with Hardy potentials, which is more involved than the null controllability of a single parabolic equation.
\item Compared with other literature about the hierarchical null controllability of PDEs, we use a simpler method to establish the equivalence between the Nash equilibrium pair and the Nash quasi-equilibrium pair under weaker conditions than those in existing studies.
\item If the nonlinearity function $F(y)$ is replaced by $F(y,\nabla y),$ due to the presence of the Hardy potential term $\frac{u}{|x|^2},$ we can not obtain the $L^2_tH^s_x$-regularity of weak solution for problem \eqref{eq1} with intial data $y_0\in H^s(\Omega)$ for some $s>1,$  such that the well-posedness of optimality system can not obtained by the Leray-Schauder's fixed points Theorem for the semilinear case. Thus, we only consider the case that the nonlinearity is $F(y).$

\item If the term $a(x,t)\nabla y$ is added to problem \eqref{eq1} with $a\in L^{\infty}(Q)$, we have to establish the global Carleman estimates for the heat equation with Hardy potentials and $H^{-1}(\Omega)$ external force to obtain the observability inequality for the adjoint system. However,  due the degeneracy of the weight function at the origin, the exponents of $s$ in front of the terms
\begin{align*}
\int_Qe^{-2\sigma}|u(x,t)|^2\,dxdt\,\,\,\textit{or}\int_Qe^{-2\sigma}|\nabla u(x,t)|^2\,dxdt
\end{align*}
in global Carleman estimate for the heat equation with Hardy potential obtained by interpolation inequality is not enough to deduce some weighted energy estimates satisfied by solutions of the corresponding dual system based on the established global Carleman inequality and the theory of optimal control,  such that we can not obtain the global Carleman inequality for the heat equation with Hardy potential and $H^{-1}(\Omega)$ external force.  Thus,  we can not deal with the term $\nabla\cdot (a(x,t)u(x,t))$ in the dual system to obtain the desired observability inequality. Base on the above reasons, we only consider problem \eqref{eq1} without gradient term. 
\end{enumerate}
The rest of this paper is organized as follows. In Section 2, we recall the well-posedness results and global Carleman inequalities for the parabolic equation with Hardy potential. Section 3 is devoted to prove the existence and uniqueness of Nash equilibrium by Lax-Milgram theorem for any given leader $f,$ and prove the exact controllability to trajectory of problem \eqref{eq1} in the linear case. In Section 4, we analyze the relation between Nash equilibrium and Nash quasi-equilibrium, and establish the exact controllability to trajectory of problem \eqref{eq1} by using Leray-Schauder fixed-point argument in the semilinear case.

	Throughout this paper, the following notations will be used:
	\[\|u\|=\|u\|_{L^2(\Omega)},\quad (u,v)=(u,v)_{L^2(\Omega)}.\]
	Moreover, we use $C$ to denote a general positive constant that will in general stand for different constants in different lines.	
	\section{Preliminaries}\label{preliminary}
	In this section,  we will recall some lemmas used in the sequel.  First of all, we give the well-posedness result of the following problem 
	\begin{equation}\label{ba}
		\begin{cases}
			u_t-\Delta u-\frac{\mu}{|x|^2}u=g,& (x,t)\in Q,\\
			u(x,t)=0,& (x,t)\in \Sigma,\\
			u(x,0)=u_0(x),&x\in \Omega.
		\end{cases}
	\end{equation}
	\begin{lem}[see \cite{vazquez2000hardy}]\label{well}
		 Assume that $g \in L^{2}(Q).$ 
		\begin{enumerate}[(i)]
			\item If $\mu < \mu^{*}(N)$. Then for any $u_0 \in L^{2}(\Omega),$ there exists a unique weak solution of problem \eqref{ba},  such that
			\[
			u \in L^{2}(0,T;H_{0}^{1}(\Omega))\cap H^1(0,T;H^{-1}(\Omega)).
			\]
			\item For $\mu = \mu^{*}(N),$ let $\mathcal{M}$ be the Hilbert space obtained by the completion of $H_{0}^{1}(\Omega)$ with respect to the following norm
			\[
			\|u\|_{\mathcal{M}} = \left(\int_{\Omega} |\nabla u|^{2}-\mu^{*}(N)\frac{|u|^{2}}{|x|^{2}}\, dx\right)^{\frac{1}{2}}.
			\]
			Then for any $u_0 \in L^{2}(\Omega),$ there exists a unique weak solution of problem \eqref{ba} such that
			\[
			u \in L^{2}(0,T;\mathcal{M})\cap H^1(0,T;\mathcal{M}^{\prime}),
			\]
			where $\mathcal{M}^{\prime}$ is the dual space of $\mathcal{M}.$ Furthermore, we have the following compact embedding
			\begin{equation}
				\mathcal{M}\hookrightarrow\hookrightarrow L^2(\Omega)\hookrightarrow\hookrightarrow\mathcal{M}^{\prime}.
			\end{equation}
		\end{enumerate}
	\end{lem}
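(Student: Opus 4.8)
The plan is to read problem \eqref{ba} as an abstract linear parabolic equation governed by a bounded, time-independent, coercive bilinear form, and then to appeal to the classical variational theory of such equations. Set $H=L^2(\Omega)$, take $V=H^1_0(\Omega)$ in case (i) and $V=\mathcal M$ in case (ii), and define
\begin{align*}
a(u,v)=\int_\Omega\nabla u\cdot\nabla v\,dx-\mu\int_\Omega\frac{uv}{|x|^2}\,dx,\qquad u,v\in V.
\end{align*}
In case (i) the Hardy inequality gives $\mu\int_\Omega|u|^2/|x|^2\,dx\le(\mu/\mu^*(N))\|\nabla u\|^2$ with ratio strictly less than $1$, so $a$ is continuous on $H^1_0(\Omega)$ and coercive, $a(u,u)\ge(1-\mu/\mu^*(N))\|\nabla u\|^2$; and $\|\nabla u\|$ is an equivalent norm on $H^1_0(\Omega)$. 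In case (ii) one has $a(u,u)=\|u\|_{\mathcal M}^2$ by the very definition of $\mathcal M$, so coercivity on $\mathcal M$ is automatic; the point that must be checked is that $(\mathcal M,H,\mathcal M')$ is a genuine Gelfand triple, i.e. that $\mathcal M\hookrightarrow L^2(\Omega)$ continuously and densely. This — together with the asserted compactness $\mathcal M\hookrightarrow\hookrightarrow L^2(\Omega)$, and dually $L^2(\Omega)\hookrightarrow\hookrightarrow\mathcal M'$ — is where the improved (logarithmic Hardy / Hardy--Poincar\'e) inequalities for the critical exponent established in \cite{vazquez2000hardy} are used: they yield $\mathcal M\hookrightarrow H^s_0(\Omega)$ for every $s\in(0,1)$, and the compact embedding $H^s_0(\Omega)\hookrightarrow\hookrightarrow L^2(\Omega)$, valid since $\Omega$ is bounded, then gives the two compact inclusions (in particular the operator associated with $a$ on $H$ has compact resolvent, hence discrete spectrum).

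With the Gelfand triple and these properties of $a$ at hand, existence and uniqueness follow from the standard argument. One may invoke directly the Lions--Lax--Milgram theorem for the evolution problem $u'+Au=g$, $u(0)=u_0$, or, equivalently, run a Faedo--Galerkin scheme on the eigenbasis of the self-adjoint operator associated with $a$: solving the finite-dimensional linear ODE systems and testing with the Galerkin solution $u^m$ gives the energy identity
\begin{align*}
\tfrac12\tfrac{d}{dt}\|u^m\|^2+a(u^m,u^m)=(g,u^m),
\end{align*}
whence coercivity and Gronwall's inequality bound $u^m$ in $L^\infty(0,T;H)\cap L^2(0,T;V)$ in terms of $\|u_0\|$ and $\|g\|_{L^2(Q)}$, and reading the equation then bounds $\partial_t u^m$ in $L^2(0,T;V')$. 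Extracting weak/weak-$*$ limits and passing to the limit (immediate, since the equation is linear and contains no lower-order terms) yields a weak solution with $u\in L^2(0,T;V)\cap H^1(0,T;V')$, which is exactly the claimed regularity in each case. Uniqueness is a consequence of linearity: if $w$ is the difference of two solutions then $w$ solves the homogeneous problem with $w(0)=0$, and the energy identity $\tfrac12\tfrac{d}{dt}\|w\|^2+a(w,w)=0$ — legitimate by the Lions--Magenes lemma, since $w\in L^2(0,T;V)$ and $w_t\in L^2(0,T;V')$ — together with $a(w,w)\ge0$ forces $w\equiv0$.

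I expect the only real obstacle to be the critical case $\mu=\mu^*(N)$, which is also the only part that is specific to this problem rather than generic parabolic theory. There the Hardy constant is (asymptotically) attained, so $a$ fails to be coercive — indeed fails to be bounded below by $\|u\|^2$ — on $H^1_0(\Omega)$, and the correct energy space $\mathcal M$ strictly contains $H^1_0(\Omega)$: its elements need not belong to $H^1$ near the origin. Giving a meaning to the weak formulation therefore hinges entirely on the embedding $\mathcal M\hookrightarrow L^2(\Omega)$ and its compactness, which in turn rest on the sharp improved Hardy inequalities of \cite{vazquez2000hardy}; granted these, the argument of the previous paragraph applies verbatim. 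For $\mu<\mu^*(N)$ nothing beyond textbook linear parabolic theory is required.
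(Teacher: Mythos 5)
The paper does not prove this lemma; it is quoted directly from \cite{vazquez2000hardy}, and your argument reproduces the standard proof given there: a Galerkin/Lions variational scheme on the Gelfand triple $V\subset L^2(\Omega)\subset V'$ with $V=H^1_0(\Omega)$ in the subcritical case and $V=\mathcal{M}$ in the critical case, where the continuous, dense and compact embedding $\mathcal{M}\hookrightarrow L^2(\Omega)$ comes from the improved Hardy--Poincar\'e inequality $\|u\|_{\mathcal{M}}^2\ge C\|u\|_{W^{1,q}_0(\Omega)}^2$ ($q<2$), hence $\mathcal{M}\hookrightarrow H^s_0(\Omega)$ for all $s<1$. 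Your identification of the critical case as the only non-generic step, and of the sharp improved Hardy inequalities as the ingredient that makes the energy space work there, is exactly right.
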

Without loss of generality, we always assume that 
\begin{align*}
\overline{B(0,1)}\subset\Omega,\,\,\,\, \overline{B(0,1)}\cap \O_{i,d}=\emptyset
\end{align*}
for any $i=1,2$ and denote by $\widetilde{\Omega}=\Omega\backslash \overline{B(0,1)}.$ In order to state the global Carleman estimate, we need to introduce a special weight function.
	\begin{lem}[see \cite{ervedoza2008control,fursikov1996controllability}]\label{lem1}
		Let $\omega_0\subset\subset \Omega$ be an arbitrary fixed nonempty open subset. Then there exists a smooth function $\Psi$ satisfying
		\begin{equation*}
			\begin{cases}                      					
\Psi(x)=\ln(|x|),& \quad x\in B(0,1),\\  
 \Psi(x)=0, &\quad x\in \Gamma,\\           
  \Psi(x)> 0,&\quad x\in \widetilde{\Omega},\\                                            
  |\nabla \Psi|>0,&\quad x\in \overline{\Omega} \backslash \omega_0.                         \end{cases}
		\end{equation*}	
	\end{lem}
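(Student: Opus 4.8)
The plan is to argue constructively in the spirit of the Fursikov--Imanuvilov weight lemma \cite{fursikov1996controllability}, grafting on the explicit logarithmic profile near the origin needed to match the scaling of the Hardy potential, as in \cite{ervedoza2008control}; throughout, ``smooth'' is understood on $\overline{\Omega}\setminus\{0\}$, the origin being the intended singular point of $\Psi$. As a first step I would fix $\ell\in C^{\infty}(\overline{\Omega})$ with $\ell>0$ in $\Omega$ and $\ell=0$, $\partial_{n}\ell<0$ on $\Gamma$ (a smoothed distance-to-$\Gamma$ does the job), together with a cut-off $\chi\in C^{\infty}(\overline{\Omega};[0,1])$ equal to $1$ on a neighbourhood of $\overline{B(0,1)}$ and equal to $0$ near $\Gamma$, and put
\[
\Psi_{1}(x):=\chi(x)\ln|x|+\bigl(1-\chi(x)\bigr)\ell(x),\qquad x\in\overline{\Omega}\setminus\{0\}.
\]
One then checks directly that $\Psi_{1}=\ln|x|$ on $B(0,1)$ (indeed on a full neighbourhood of $\overline{B(0,1)}$, where $\chi\equiv1$), that $\Psi_{1}=\ell=0$ on $\Gamma$, and that on $\widetilde{\Omega}$ one has $|x|>1$, so $\ln|x|>0$ and $\ell>0$ force $\Psi_{1}>0$. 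Since moreover $|\nabla\Psi_{1}|=1/|x|>0$ on a punctured neighbourhood of $\overline{B(0,1)}$ (with value $1$ on $\partial B(0,1)$) and $|\nabla\Psi_{1}|=|\nabla\ell|>0$ on $\Gamma$, the critical set $K:=\{x\in\overline{\Omega}\setminus\{0\}:\nabla\Psi_{1}(x)=0\}$ is a compact subset of $\widetilde{\Omega}$.

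The second step is to remove these critical points by an interior diffeomorphism. They cannot simply be absent: $\Psi_{1}$ is continuous on $\overline{\widetilde{\Omega}}$, vanishes on the whole boundary $\partial\widetilde{\Omega}=\Gamma\cup\partial B(0,1)$ and is positive inside, hence attains an interior maximum, so $K\neq\emptyset$; accordingly the statement presupposes that $\omega_{0}$ meets $\widetilde{\Omega}$ (as it does in all our applications), and I fix an open ball $B_{*}\subset\subset\omega_{0}\cap\widetilde{\Omega}$. The set $\widetilde{\Omega}$ is connected (since $\overline{B(0,1)}$ is compactly contained in the domain $\Omega$ and $N\ge3$), $K$ is compact in it, and $B_{*}\subset\widetilde{\Omega}$ is open and nonempty; a standard compactly supported ambient isotopy of $\widetilde{\Omega}$ that flows $K$ along paths into $B_{*}$ therefore produces a diffeomorphism $\Phi\in\mathrm{Diff}^{\infty}(\overline{\Omega})$ equal to the identity on neighbourhoods of $\overline{B(0,1)}$ and of $\Gamma$, preserving $\widetilde{\Omega}$, with $\Phi(K)\subset\omega_{0}$. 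I then define $\Psi:=\Psi_{1}\circ\Phi^{-1}$. The first three requirements follow because $\Phi$ is the identity near $\overline{B(0,1)}$ and near $\Gamma$ and preserves $\widetilde{\Omega}$; and from
\[
\nabla\Psi(y)=\bigl(D\Phi^{-1}(y)\bigr)^{\top}\nabla\Psi_{1}\bigl(\Phi^{-1}(y)\bigr),
\]
with $D\Phi^{-1}(y)$ invertible, one reads off that $\nabla\Psi(y)=0$ exactly when $\Phi^{-1}(y)\in K$, i.e.\ when $y\in\Phi(K)\subset\omega_{0}$, so that $|\nabla\Psi|>0$ on $\overline{\Omega}\setminus(\omega_{0}\cup\{0\})$. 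Finally $\Psi\in C^{\infty}(\overline{\Omega}\setminus\{0\})$ since $\Psi_{1}$ and $\Phi$ are.

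I expect the main obstacle to be the construction of the compactly supported diffeomorphism $\Phi$ that carries the topologically unavoidable critical set $K$ into $\omega_{0}$ while leaving the boundary normalizations of $\Psi_{1}$ untouched: this is the genuine content of the Fursikov--Imanuvilov lemma and relies on the connectedness of $\widetilde{\Omega}$. The feature specific to the present singular framework is comparatively light bookkeeping, namely checking that the explicit profile $\ln|x|$ contributes no critical points in $\overline{B(0,1)}\setminus\{0\}$ and does not degenerate on $\partial B(0,1)$, so that $K$ stays confined to $\widetilde{\Omega}$ and can be swallowed by $\omega_{0}$.
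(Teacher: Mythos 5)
The paper does not actually prove this lemma --- it is quoted from \cite{fursikov1996controllability,ervedoza2008control} --- so the only question is whether your construction stands on its own. Your first step (gluing $\ln|x|$ to a boundary-defining function $\ell$ with a cut-off, verifying the sign conditions, and confining the critical set $K$ of $\Psi_1$ to a compact subset of $\widetilde{\Omega}$) is correct, and your observation that the statement tacitly requires $\omega_0\cap\widetilde{\Omega}\neq\emptyset$ is a genuinely good catch: without it the conclusion is false, by your own maximum argument.

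The gap is in the second step. A compactly supported ambient isotopy of $\widetilde{\Omega}$ cannot in general carry an arbitrary compact set $K$ into a small ball $B_*$: an isotopy preserves how $K$ sits homotopically inside $\widetilde{\Omega}$. Concretely, take $\Omega=B(0,2)$ and choose $\ell$ and $\chi$ radial; then $\Psi_1$ is radial and its critical set contains an entire sphere $\{|x|=r_0\}$ with $1<r_0<2$, which is a deformation retract of the annular region $\widetilde{\Omega}$ and represents the generator of $H_{N-1}(\widetilde{\Omega})\cong\mathbb{Z}$. No diffeomorphism of $\widetilde{\Omega}$ isotopic to the identity can map it into a contractible ball, so the sentence ``a standard compactly supported ambient isotopy \dots flows $K$ along paths into $B_*$'' fails exactly where the real content of the lemma lies. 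The standard repair --- and what the cited proofs actually do --- is to first reduce to finitely many critical points: perturb $\Psi_1$ in $C^2$ only on a compact subset of $\widetilde{\Omega}$ kept away from neighbourhoods of $\overline{B(0,1)}$ and of $\Gamma$ (where $|\nabla\Psi_1|$ has a positive lower bound, so a small perturbation creates no new critical points and leaves the prescribed profile $\ln|x|$ and the boundary normalization untouched), so that the perturbed function is a Morse function there and $K$ becomes a finite set $\{a_1,\dots,a_k\}\subset\widetilde{\Omega}$. A finite set of points \emph{can} be pushed into $B_*\subset\omega_0\cap\widetilde{\Omega}$ by a compactly supported flow along pairwise disjoint arcs, after which your composition $\Psi:=\Psi_1\circ\Phi^{-1}$ and the gradient computation go through verbatim. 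Without this Morse reduction the argument does not close.
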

Moreover,  we also introduce the following weight functions:
	\begin{align*}
		\theta(t)=t^{-3}(T-t)^{-3},\quad \sigma(x,t)=s\theta(t)(e^{2\lambda\sup\Psi}-\frac{1}{2}|x|^2-e^{\lambda\Psi}),\quad \Phi(x)=e^{\lambda \Psi(x)},
	\end{align*}
	where $s$ and $\lambda$ are positive parameters.

In what follows, we will recall the global Carleman inequality for the heat equations with Hardy potential. 	
	\begin{lem}[see \cite{ervedoza2008control,vancostenoble2011lipschitz}]\label{carle}
Assume that $u_0\in L^2(\Omega),$ $g\in L^2(Q)$ and let $\sigma,$ $\theta,$ $\Psi,$ $\Phi$ be defined as above.  If $u$ is the unique weak solution of problem
		\begin{equation}\label{careq}
			\begin{cases}
				-u_t-\Delta u-\frac{\mu}{|x|^2}u=g,& (x,t)\in Q,\\
				u(x,t)=0,& (x,t)\in \Sigma,\\
				u(x,T)=u_0,&x\in \Omega,
			\end{cases}
		\end{equation}
then there exist two positive constants $C>0$ and $\lambda_0>1$ satisfying for any $\lambda\geq \lambda_0,$ we can choose a positive constant $s_0(\lambda)>0,$ such that for any $s\geq s_0(\lambda),$
		\begin{equation}\label{carleman}
			\begin{aligned}
			&s^3\iint_{Q}\theta^3e^{-2\sigma}|x|^2u^2\,dxdt+s^3\lambda^4\iint_{\widetilde{\Omega}\times(0,T)}\theta^3\Phi^3e^{-2\sigma}u^2\,dxdt+s\iint_{Q}\theta e^{-2\sigma}\frac{u^2}{|x|}\,dxdt\\
			&+s(\mu^*(N)-\mu)\iint_{Q}\theta e^{-2\sigma}\frac{u^2}{|x|^2}\,dxdt+s\lambda^2\iint_{\widetilde{\Omega}\times(0,T)}\theta\Phi e^{-2\sigma}|\nabla u|^2\,dxdt\\
			\le&C\iint_{Q}e^{-2\sigma}g^2\,dxdt+Cs^3\lambda^4\iint_{\omega_0\times(0,T)}\theta^3\Phi^3e^{-2\sigma}u^2\,dxdt.
			\end{aligned}
		\end{equation}
	\end{lem}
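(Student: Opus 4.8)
\emph{Proof proposal.} The estimate is quoted from \cite{ervedoza2008control,vancostenoble2011lipschitz}, so I only describe the strategy I would follow. The plan is to run the classical Fursikov--Imanuvilov conjugation argument, modified to accommodate the inverse-square singular potential. First I would set $\psi:=e^{-\sigma}u$; since $\theta(t)\to+\infty$ as $t\to 0^+$ and $t\to T^-$ and $\sigma>0$ for $\lambda$ large (the bracket defining $\sigma$ is positive once $e^{2\lambda\sup\Psi}$ dominates the bounded terms $\tfrac12|x|^2+e^{\lambda\Psi}$), the function $\psi$ and its spatial derivatives vanish at $t=0$ and $t=T$. A direct computation gives
\begin{align*}
e^{-\sigma}\Big(-\partial_t-\Delta-\tfrac{\mu}{|x|^2}\Big)(e^{\sigma}\psi)
&=-\psi_t-\Delta\psi-2\nabla\sigma\cdot\nabla\psi\\
&\quad-\big(|\nabla\sigma|^2+\Delta\sigma+\sigma_t\big)\psi-\tfrac{\mu}{|x|^2}\psi
=e^{-\sigma}g .
\end{align*}
I would then write the left-hand operator as $M_1\psi+M_2\psi+R\psi$, where $M_1\psi:=-\Delta\psi-|\nabla\sigma|^2\psi-\tfrac{\mu}{|x|^2}\psi$ is (formally) self-adjoint, $M_2\psi:=-\psi_t-2\nabla\sigma\cdot\nabla\psi-(\Delta\sigma)\psi$ is (formally) skew-adjoint, and $R\psi$ collects the remaining lower-order terms, and use the orthogonality-type identity
\[
\|M_1\psi\|_{L^2(Q)}^2+\|M_2\psi\|_{L^2(Q)}^2+2\big(M_1\psi,M_2\psi\big)_{L^2(Q)}=\|e^{-\sigma}g-R\psi\|_{L^2(Q)}^2 .
\]

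The core of the argument is to expand $\big(M_1\psi,M_2\psi\big)_{L^2(Q)}$ by integration by parts in $x$ and $t$; the homogeneous Dirichlet condition on $\Sigma$ together with the vanishing of $\psi$ at $t=0,T$ kills all boundary terms. The precise shape of the weight --- namely $\Psi(x)=\ln|x|$ on $B(0,1)$ and the quadratic correction $-\tfrac12|x|^2$ inside $\sigma$ --- is chosen exactly so that the dominant contributions coming from $\nabla\sigma$ and $\Delta\sigma$ recombine into the positive quantities $s^3\theta^3e^{-2\sigma}|x|^2u^2$, $s^3\lambda^4\theta^3\Phi^3e^{-2\sigma}u^2$ on $\widetilde\Omega$, $s\theta e^{-2\sigma}|x|^{-1}u^2$, and $s\lambda^2\theta\Phi e^{-2\sigma}|\nabla u|^2$ on $\widetilde\Omega$, while the leftover terms carry strictly lower powers of $s$ or $\lambda$. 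To produce the singular term $s(\mu^*(N)-\mu)\theta e^{-2\sigma}|x|^{-2}u^2$ I would isolate, among the contributions of the cross term and of $\|M_1\psi\|^2$, a quantity of the form $c\,s\iint_Q\theta\big(|\nabla\psi|^2-\tfrac{\mu}{|x|^2}\psi^2\big)$ and then invoke the Hardy inequality $\mu^*(N)\int_\Omega|x|^{-2}\psi^2\le\int_\Omega|\nabla\psi|^2$; this is precisely what forces the factor $\mu^*(N)-\mu$ and makes the estimate degenerate, yet remain valid, in the critical case $\mu=\mu^*(N)$. The lower-order remainders, in particular $\|R\psi\|_{L^2(Q)}^2$ and the commutator terms, are absorbed by fixing $\lambda\ge\lambda_0$ large and then $s\ge s_0(\lambda)$ large; here one uses $|\sigma_t|\lesssim s\,\theta^{4/3}$, the bound $|\Delta\sigma|\lesssim s\lambda^2\theta\Phi$ on $\widetilde\Omega$, and the fact that $\theta$ is bounded below by a positive constant, so that all powers of $\theta$ below $\theta^3$ are dominated once $s$ is large.

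Two localizations then have to be dealt with. Because no smooth function on $\overline\Omega$ can have nonvanishing gradient everywhere, the set $\omega_0$ --- which contains the critical point of $\Psi$ --- must be excised, and this produces the observation term $Cs^3\lambda^4\iint_{\omega_0\times(0,T)}\theta^3\Phi^3e^{-2\sigma}u^2$ on the right-hand side, exactly as in the classical estimate. The delicate localization is the one near the origin: there the weight $\Phi=|x|^\lambda$ and the factor $|x|^2$ both vanish, so the terms $s^3\theta^3e^{-2\sigma}|x|^2u^2$ and $s^3\lambda^4\theta^3\Phi^3e^{-2\sigma}u^2$ give no information on $u$; instead one must arrange the singular contributions of $\nabla\sigma$, $\Delta\sigma$ (which near $x=0$ involve $|x|^{-2}$ through $\Delta\ln|x|=(N-2)|x|^{-2}$) and of $\tfrac{\mu}{|x|^2}$ so that the genuinely singular weights $s\theta e^{-2\sigma}|x|^{-2}u^2$ and $s\theta e^{-2\sigma}|x|^{-1}u^2$ on the left absorb all the errors created near the origin. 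Finally I would undo the substitution $\psi=e^{-\sigma}u$, converting $\psi$-estimates into $u$-estimates via $|\nabla\psi|^2\le 2e^{-2\sigma}|\nabla u|^2+2e^{-2\sigma}|\nabla\sigma|^2u^2$ and $|\nabla\sigma|^2\lesssim s^2\theta^2\big(|x|^2+\lambda^2\Phi^2|\nabla\Psi|^2\big)$. The step I expect to be the real obstacle is this analysis near the origin --- balancing the singular potential $\mu/|x|^2$ against the degenerate weight, which in the $N$-dimensional case also requires careful treatment of the angular part of the Laplacian; this is the technical heart of \cite{ervedoza2008control}.
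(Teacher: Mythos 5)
The paper does not prove Lemma \ref{carle}; it is quoted directly from \cite{ervedoza2008control,vancostenoble2011lipschitz}, so there is no internal proof to compare against. Your sketch is a faithful outline of the Fursikov--Imanuvilov conjugation argument used in those references --- the symmetric/skew-symmetric splitting after conjugating by $e^{-\sigma}$, the logarithmic weight near the origin combined with the $-\tfrac{1}{2}|x|^2$ correction, the Hardy inequality producing the $(\mu^*(N)-\mu)$ factor, and the excision of the critical set $\omega_0$ yielding the observation term --- and I see no error in it at this level of detail.
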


	Moreover, we introduce the Cacciopoli's inequality which is also relevant to the work in this paper: 
	\begin{lem}[see \cite{ervedoza2008control,vancostenoble2008null}]\label{Cap}
		Suppose that $\omega_0$ and $\omega$ are arbitrary open sets satisfying $\omega_0\subset\subset\omega\subset\subset\Omega$ and $0\notin \widetilde{\omega}$. Let $\widetilde{\sigma}:\Omega\times (0,T)\to \R^{+}$ be a function such that
		\begin{equation}
			\widetilde{\sigma}(x,t)\to+\infty\quad \text{as}\ t\to 0^{+}\ \text{and }\ t\to T^{-}.
		\end{equation}
		There exists a positive constant $C$ independent of $\mu\le \mu^*(N)$ such that any solution $u$ of problem \eqref{careq} satisfies the following inequality
		\begin{equation}
			\iint_{\omega_0\times(0,T)}e^{-2\widetilde{\sigma}}|\nabla u|^2\,dxdt\le C\iint_{\omega\times (0,T)}e^{-\widetilde{\sigma}}|u|^2\,dxdt+C\iint_{\omega\times(0,T)}e^{-2\widetilde{\sigma}}g^2.
		\end{equation}
	\end{lem}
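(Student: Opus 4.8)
The decisive point is that the hypothesis $0\notin\overline{\omega}$ eliminates the singularity from the argument: there is $\delta>0$ with $|x|\ge\delta$ for all $x\in\omega$, so on $\omega\times(0,T)$ the potential satisfies $\mu/|x|^2\le\mu^*(N)/\delta^2=:M$ uniformly in $\mu\le\mu^*(N)$, and when \eqref{careq} is tested against functions supported in $\omega$ it behaves as a uniformly parabolic equation with an $L^\infty$ zero-order term. Hence the inequality is a purely local Caccioppoli estimate, and I would prove it by the classical energy method, exploiting $e^{-2\widetilde{\sigma}}$ as a built-in smooth cut-off in time: since $\widetilde{\sigma}\to+\infty$ as $t\to0^+,T^-$, the weight $e^{-2\widetilde{\sigma}}$ and all its $t$-derivatives vanish at $t=0$ and $t=T$, which is exactly what lets the estimate hold on the whole interval $(0,T)$ with no temporal boundary contributions.

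\textbf{Step 1 (cut-off and energy identity).} Fix $\xi\in C_c^\infty(\omega)$ with $0\le\xi\le1$ and $\xi\equiv1$ on $\omega_0$. The plan is to multiply the first equation of \eqref{careq} by $\xi^2 e^{-2\widetilde{\sigma}}u$ and integrate over $\omega\times(0,T)$. No spatial boundary terms appear because $\xi$ is compactly supported in $\omega$; no temporal boundary terms appear because $e^{-2\widetilde{\sigma}}(x,\cdot)$ vanishes, together with its time derivative, at the endpoints $t=0$ and $t=T$.

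\textbf{Step 2 (isolate the gradient, absorb the rest).} Integration by parts turns the diffusion term into the good quantity $\iint_{\omega\times(0,T)}\xi^2 e^{-2\widetilde{\sigma}}|\nabla u|^2$ plus a cross term that Young's inequality dominates by $\tfrac12\iint\xi^2 e^{-2\widetilde{\sigma}}|\nabla u|^2+C\iint\bigl(|\nabla\xi|^2+\xi^2|\nabla\widetilde{\sigma}|^2\bigr)e^{-2\widetilde{\sigma}}u^2$; the parabolic term becomes $\iint\xi^2 u^2 e^{-2\widetilde{\sigma}}\bigl(-\partial_t\widetilde{\sigma}\bigr)$, bounded in modulus by $C\iint\xi^2|\partial_t\widetilde{\sigma}|e^{-2\widetilde{\sigma}}u^2$; the Hardy term is controlled by $M\iint\xi^2 e^{-2\widetilde{\sigma}}u^2$; and the source term gives $\tfrac12\iint\xi^2 e^{-2\widetilde{\sigma}}g^2+\tfrac12\iint\xi^2 e^{-2\widetilde{\sigma}}u^2$. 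Using $\widetilde{\sigma}\ge0$ (hence $e^{-2\widetilde{\sigma}}\le e^{-\widetilde{\sigma}}$) together with the elementary bound $\bigl(|\nabla\widetilde{\sigma}|^2+|\partial_t\widetilde{\sigma}|\bigr)e^{-\widetilde{\sigma}}\le C$ — valid for the weights $\widetilde{\sigma}$ occurring in the applications, whose blow-up at $t=0,T$ is merely polynomial, the constant $C$ being allowed to depend on $\widetilde{\sigma}$ but not on $\mu$ — every $u^2$ contribution is absorbed into $C\iint_{\omega\times(0,T)}e^{-\widetilde{\sigma}}u^2$. Moving the half of the gradient term to the left-hand side gives $\iint_{\omega\times(0,T)}\xi^2 e^{-2\widetilde{\sigma}}|\nabla u|^2\le C\iint_{\omega\times(0,T)}e^{-\widetilde{\sigma}}u^2+C\iint_{\omega\times(0,T)}e^{-2\widetilde{\sigma}}g^2$, and since $\xi\equiv1$ on $\omega_0$ the claimed inequality follows.

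\textbf{Expected main obstacle.} The only genuine difficulty is making Steps 1--2 rigorous at the regularity afforded by Lemma \ref{well} (and its critical-case analogue involving $\mathcal{M}$ and $\mathcal{M}'$): I would legitimise the integration by parts in $t$ and the use of $\xi^2 e^{-2\widetilde{\sigma}}u$ as a test function by a truncation/density argument — first working on $[\varepsilon,T-\varepsilon]$, where interior parabolic smoothing (away from $t\in\{0,T\}$ and, by $0\notin\overline{\omega}$, away from the singular point) supplies the regularity needed to multiply and integrate by parts, and then letting $\varepsilon\to0$ using the rapid decay of $e^{-2\widetilde{\sigma}}$ at the temporal endpoints. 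This part is routine; the entire structural content of the lemma is the observation that $0\notin\overline{\omega}$ confines the computation to a region where the Hardy potential is harmless, reducing everything to a textbook Caccioppoli estimate.
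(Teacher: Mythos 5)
Your proof is correct and is essentially the standard argument: the paper itself gives no proof of this lemma (it is imported verbatim from \cite{ervedoza2008control,vancostenoble2008null}), and the cited sources prove it by exactly the localized energy/Caccioppoli computation you describe, with $0\notin\overline{\omega}$ rendering the Hardy term a bounded zero-order potential uniformly in $\mu\le\mu^*(N)$. Your one substantive addition is the right one to make: as literally stated the lemma assumes nothing about $\widetilde{\sigma}$ beyond blow-up at $t=0,T$, which cannot suffice, and your explicit hypothesis $\bigl(|\nabla\widetilde{\sigma}|^2+|\partial_t\widetilde{\sigma}|\bigr)e^{-\widetilde{\sigma}}\le C$ is precisely what the Carleman weights used later in the paper satisfy and what the single power $e^{-\widetilde{\sigma}}$ on the right-hand side is there to absorb.
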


	\section{The linear case} 
	In this section,  we will study the hierarchical exact controllability of problem \eqref{eq1} in the linear case ($F\equiv 0$).  To begin with,  we introduce a new variable $z:=y-\bar y,$ then $z$ satisfies the following problem
	\begin{align}\label{eqn}
		\begin{cases}
			z_t-\Delta z-\frac{\mu}{|x|^2}z=f1_{\O}+v^11_{\O_1}+v^21_{\O_2} ,&(x,t)\in Q,\\
			z(x,t)=0,& (x,t)\in\Sigma,\\
			z(x,0)=z_0(x),&x\in \Omega,
		\end{cases}
	\end{align}
	where $z_0=y_0-\bar{y}_0$. Then the exact controllability to the trajectory of system (\ref{eq1}) is equivalent to the null controllability of problem \eqref{eqn}. More precisely, the condition \eqref{exa} is equivalent to
	\begin{align}\label{nullctrl}
		z(x,T)=0 \quad  \text{for \textit{a.e.}}\  x\in\Omega.
	\end{align}
Denote by $z_{i,d}=y_{i,d}-\bar{y},$ then we can reformula the cost functionals $J_i$ as follows:
	\begin{align}\label{sc}
		J_i(f;v^1,v^2)=\frac{1}{2}\iint_{\O_{i,d}\times (0,T)}|z-z_{i,d}|^2\,dxdt+\frac{\alpha_i}{2}\iint_{\O_{i}\times (0,T)}|v^i|^2\,dxdt,\quad i=1,2. 
	\end{align}

	
	\subsection{Existence and uniqueness of the Nash equilibrium}
	In this subsection, we will prove the existence and uniqueness of Nash equilibria. Let $f\in L^2(\O\times (0,T))$ be fixed and define 
	\begin{align}\label{H}
		H_i=L^2(\O_{i}\times (0,T)),\,\,\, H=H_1\times H_2.
	\end{align} 
	Note that, in this linear case, the cost functionals $J_i$ are convex and continuously differentiable. Thus, \eqref{Nash1} is equivalent to
	\begin{align}\label{queq}
		D_iJ_i(f;\bar{v}^1,\bar{v}^2)\cdot v^i=0,\quad \forall v^i\in L^2(\O_{i}\times (0,T)).
	\end{align}
	Therefore, $(\bar{v}^1,\bar{v}^2)$ is a Nash equilibrium if and only if for any $v^i\in H_i$
	\begin{align}\label{Nash2}
		\iint_{\O_{i,d}\times (0,T)}(z(f;\bar{v}^1,\bar{v}^2)-z_{i,d})w^i\,dxdt+\alpha_i\iint_{\O_i\times(0,T)}\bar{v}^iv^i\,dxdt=0,
	\end{align}
	where $w^i$ solves the following system
	\begin{align}
		\begin{cases}
			w^i_t-\Delta w^i-\frac{\mu}{|x|^2}w^i=v^i1_{\O_i},&(x,t)\in Q,\\
			w^i=0,&(x,t) \in \Sigma,\\
			w^i(x,0)=0,&x\in\Omega.
		\end{cases}\label{eq2}
	\end{align}
	Employing energy estimate, we can define the operators $L_i\in L(H_i;L^2(Q))$ by
	\[L_iv^i=w^i,\]
	where $w^i$ is the solution to problem \eqref{eq2}.
	
	In what follows, we will prove the following result.
	\begin{pro}\label{31}
Assume that $$\alpha_i-\frac{1}{4}\|L_i\|^2_{L(H_i;L^2(Q))}>0\quad (i=1,2).$$ Then for each $f\in L^2(\O\times(0,T))$, there exists a unique Nash equilibrium $\left(\bar v^1(f),\bar v^2(f)\right)$. Moreover, there exists a positive constant $C,$ such that
		\begin{equation}\label{Nashe}
			\|\left(\bar v^1(f),\bar v^2(f)\right)\|_{H}\le C\left(\|f\|_{L^2(\O\times(0,T))}+1\right).
		\end{equation}
	\end{pro}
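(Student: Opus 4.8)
The plan is to reformulate the Nash equilibrium characterization \eqref{Nash2} as a single variational equation on the Hilbert space $H=H_1\times H_2$ and apply the Lax--Milgram theorem. First I would introduce, for each leader $f$, the state $\zeta=\zeta(f)$ solving \eqref{eqn} with $v^1=v^2=0$, so that by linearity $z(f;v^1,v^2)=\zeta(f)+L_1v^1+L_2v^2$, where $L_i$ are the bounded solution operators defined above. Substituting this decomposition into \eqref{Nash2}, the pair $(\bar v^1,\bar v^2)$ is a Nash equilibrium if and only if for all $(v^1,v^2)\in H$,
\begin{align*}
a\bigl((\bar v^1,\bar v^2),(v^1,v^2)\bigr) := \sum_{i=1}^2\left[\iint_{\O_{i,d}\times(0,T)}(L_1\bar v^1+L_2\bar v^2)L_iv^i\,dxdt+\alpha_i\iint_{\O_i\times(0,T)}\bar v^iv^i\,dxdt\right]\\
= \sum_{i=1}^2\iint_{\O_{i,d}\times(0,T)}(z_{i,d}-\zeta(f))L_iv^i\,dxdt =: \langle \ell_f,(v^1,v^2)\rangle.
\end{align*}
The bilinear form $a$ is clearly continuous on $H$ (since each $L_i\in L(H_i;L^2(Q))$ and restriction to $\O_{i,d}\times(0,T)$ is bounded), and the right-hand side $\ell_f$ is a continuous linear functional on $H$ with norm controlled by $\|\zeta(f)\|_{L^2(Q)}+\sum_i\|z_{i,d}\|_{L^2(\O_{i,d}\times(0,T))}$, which in turn is bounded by $C(\|f\|_{L^2(\O\times(0,T))}+1)$ via the energy estimate for \eqref{eqn}.

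The key step is \emph{coercivity}. Expanding the diagonal and cross terms,
\begin{align*}
a\bigl((v^1,v^2),(v^1,v^2)\bigr) = \sum_{i=1}^2\left(\alpha_i\|v^i\|_{H_i}^2 + \iint_{\O_{i,d}\times(0,T)}|L_iv^i|^2\,dxdt\right) + 2\iint_{\O_{1,d}\times(0,T)\cup\O_{2,d}\times(0,T)}\!\!\!\!(\cdots),
\end{align*}
so the only obstruction is the cross terms $\iint_{\O_{i,d}\times(0,T)}(L_jv^j)(L_iv^i)\,dxdt$ for $j\neq i$. I would bound each such term by Cauchy--Schwarz and Young's inequality: $\bigl|\iint_{\O_{i,d}\times(0,T)}(L_jv^j)(L_iv^i)\bigr|\le \tfrac12\|L_iv^i\|_{L^2(Q)}^2+\tfrac12\|L_jv^j\|_{L^2(Q)}^2\le \tfrac12\|L_i\|^2\|v^i\|_{H_i}^2+\tfrac12\|L_j\|^2\|v^j\|_{H_j}^2$. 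Dropping the nonnegative diagonal terms $\iint_{\O_{i,d}\times(0,T)}|L_iv^i|^2$ and absorbing, the cross contributions (summing over both equations, each contributing two symmetric halves) produce at most $\sum_{i=1}^2 \tfrac12\|L_i\|^2\|v^i\|_{H_i}^2$ in total after a careful bookkeeping of the four cross pieces; hence $a((v^1,v^2),(v^1,v^2))\ge \sum_{i=1}^2(\alpha_i-\tfrac14\|L_i\|^2)\|v^i\|_{H_i}^2$, which is coercive precisely under the hypothesis $\alpha_i-\tfrac14\|L_i\|^2_{L(H_i;L^2(Q))}>0$. The main obstacle — and the reason I expect the constant $\tfrac14$ rather than $\tfrac12$ — is getting this bookkeeping exactly right: the quadratic form has a $2\times2$ block structure, and one must check that the worst case distributes the off-diagonal norm symmetrically; writing it as a matrix inequality $\begin{pmatrix}\alpha_1 & * \\ * & \alpha_2\end{pmatrix}$ being positive definite and estimating the off-diagonal entry by $\tfrac12\|L_1\|\|L_2\|$ (then using $2ab\le a^2+b^2$) is the cleanest route.

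Once Lax--Milgram yields a unique $(\bar v^1,\bar v^2)\in H$ solving the variational equation, uniqueness of the Nash equilibrium is immediate, and the estimate \eqref{Nashe} follows from the standard Lax--Milgram bound $\|(\bar v^1,\bar v^2)\|_H\le \tfrac{1}{c_0}\|\ell_f\|_{H'}$ with coercivity constant $c_0=\min_i(\alpha_i-\tfrac14\|L_i\|^2)$, combined with the already-noted bound $\|\ell_f\|_{H'}\le C(\|f\|_{L^2(\O\times(0,T))}+1)$. I would close by noting that the equivalence of \eqref{Nash1} with the stationarity condition \eqref{queq}–\eqref{Nash2} was established above using convexity and $C^1$-differentiability of each $J_i$ in its own variable, so no further argument is needed there.
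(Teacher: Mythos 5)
Your overall strategy is exactly the paper's: decompose $z=u+L_1v^1+L_2v^2$, recast \eqref{Nash2} as a variational equation on $H$, and invoke Lax--Milgram, with continuity of the right-hand side giving \eqref{Nashe}. That part is fine. The genuine gap is in the coercivity step, which is the only place the hypothesis $\alpha_i-\tfrac14\|L_i\|^2>0$ enters. The bookkeeping you describe does not produce the constant $\tfrac14$. If you drop the nonnegative diagonal terms $\iint_{\O_{i,d}\times(0,T)}|L_iv^i|^2$ and split each cross term symmetrically as $\tfrac12\|L_iv^i\|^2+\tfrac12\|L_jv^j\|^2$, the two cross terms together cost $\|L_1\|^2\|v^1\|_{H_1}^2+\|L_2\|^2\|v^2\|_{H_2}^2$, so you only get coercivity under $\alpha_i>\|L_i\|^2$. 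Your alternative matrix route, bounding the off-diagonal entry and using $2ab\le a^2+b^2$, gives the same constant $1$ (or the condition $\alpha_1\alpha_2>\|L_1\|^2\|L_2\|^2$), not $\tfrac14$. You cannot both discard the diagonal terms and reach $\tfrac14$.

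The fix, which is what the paper does, is to \emph{keep} the diagonal term $\iint_{\O_{i,d}\times(0,T)}|L_iv^i|^2\,dxdt$ and use it to absorb the large half of an asymmetric Young inequality: for each $i$,
\begin{equation*}
\iint_{\O_{i,d}\times(0,T)}|L_{3-i}v^{3-i}|\,|L_iv^i|\,dxdt\le \iint_{\O_{i,d}\times(0,T)}|L_iv^i|^2\,dxdt+\frac14\iint_{\O_{i,d}\times(0,T)}|L_{3-i}v^{3-i}|^2\,dxdt,
\end{equation*}
so that after cancellation only $-\tfrac14\iint_{\O_{i,d}\times(0,T)}|L_{3-i}v^{3-i}|^2\,dxdt\ge-\tfrac14\|L_{3-i}\|^2_{L(H_{3-i};L^2(Q))}\|v^{3-i}\|_{H_{3-i}}^2$ survives; summing over $i$ and relabeling $i\leftrightarrow 3-i$ yields exactly $\sum_{i=1}^2\bigl(\alpha_i-\tfrac14\|L_i\|^2_{L(H_i;L^2(Q))}\bigr)\|v^i\|_{H_i}^2$. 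With that correction the rest of your argument (Lax--Milgram, the bound $\|(\bar v^1,\bar v^2)\|_H\le\delta^{-1}\|\ell_f\|_{H'}$ and the energy estimate for $u$) goes through as in the paper.
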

	\begin{proof}
		Let $u$ be the solution to
		\begin{align}
			\begin{cases}
				u_t-\Delta u-\frac{\mu}{|x|^2}u=f1_{\O},& (x,t)\in Q,\\
				u=0,& (x,t)\in \Sigma,\\
				u(x,0)=z^0(x),&x\in \Omega,
			\end{cases}
		\end{align}
		then we can write the solution to (\ref{eqn}) into the form $z=L_1(v ^1)+L_2(v^2)+u$. Accordingly, we can rewrite \eqref{Nash2} in the form
		\begin{align*}
			\iint_{\O_{i,d}\times (0,T)}(L_1\bar v^1+L_2\bar v^2+u-z_{i,d})L_iv^i\,dxdt+\alpha_i\iint_{\O_i\times (0,T)}\bar v^i v^i\,dxdt=0,\quad \forall v^i\in H_i.
		\end{align*}
		Let $L_i^*\in L(L^2(Q);H_i)$ be the adjoint operator of $L_i$, then the above formula is equivalent to 
		\begin{align*}
			\iint_{\O_i\times (0,T)}\left[L_i^*\left((L_1\bar v^1+L_2\bar v^2+u-z_{i,d})1_{\O_{i,d}}\right)+\alpha_i\bar v^i\right]v^i\,dxdt=0,\quad \forall v^i\in H_i.
		\end{align*}
		In other words, $(\bar v^1,\bar v^2)$ is a Nash equilibrium if and only if
		\begin{align}
			L_i^*\left((L_1\bar v^1+L_2\bar v^2)1_{\O_{i,d}}\right)+\alpha_i\bar v^i=L_i^*\left((z_{i,d}-u)1_{\O_{i,d}}\right),\quad i=1,2.\label{Nash}
		\end{align}
		
		Define the operator $A:H\to H$ by
		\begin{align*}
			A(v^1,v^2)=(L_1^*((L_1v^1+L_2v^2)1_{\O_{1,d}})+\alpha_1v^1,L_2^*((L_1v^1+L_2v^2)1_{\O_{2,d}})+\alpha_2v^2),
		\end{align*}
		then we have $A\in L(H),$ since $L_i$ is bounded. Moreover, for any $(v^1,v^2)\in H$, we have
		\begin{align*}
			\left|\left( A(v^1,v^2),(v^1,v^2)\right)_H\right|=&\sum_{i=1}^{2}\left|\iint_{\O_i\times (0,T)}\big [L_i^*((L_1v^1+L_2v^2)1_{\O_{i,d}})v^i+\alpha_i|v^i|^2\big]\,dxdt\right|\\
			=&\sum_{i=1}^{2}\left|\iint_{Q}(L_1v^1+L_2v^2)1_{\O_{i,d}}L_iv^i\,dxdt+\iint_{\O_i\times (0,T)}\alpha_i|v^i|^2\,dxdt\right|\\
			\ge& \sum_{i=1}^{2}\left(\alpha_i\|v^i\|^2_{H_i}+\iint_Q|L_iv^i|^21_{\O_{i,d}}\,dxdt-\iint_{Q}|L_{3-i}v^{3-i}||L_iv^i|1_{\O_{i,d}}\,dxdt\right)\\
			\ge& \sum_{i=1}^{2}\left(\alpha_i\|v^i\|^2_{H_i}-\frac{1}{4}\iint_{\O_{i,d}\times (0,T)}|L_{3-i}v^{3-i}|^2\,dxdt\right)\\
			\ge&\sum_{i=1}^{2}\left(\alpha_i-\frac{1}{4}\|L_i\|_{L(H_i;L^2(Q))}^2\right)\|v^i\|^2_{H_i}.
		\end{align*}
	Let
		$$\delta_i :=\alpha_i-\frac{1}{4}\|L_i\|^2_{L(H_i;L^2(Q))}>0$$
		and denote by $\delta:=\min\{\delta_1,\delta_2\},$ then we have
		\begin{align}
			|(A(v^1,v^2),(v^1,v^2))_H|\ge \delta\|(v^1,v^2)\|^2_H.
		\end{align}
Hence, according to Lax-Milgram theorem,  we conclude that $A:H\to H$ is invertible and for each $F\in H^{\prime}$, there exists exactly one pair $(\bar v^1,\bar v^2),$ such that
		\begin{align}
			\left(A(\bar v^1,\bar v^2),(v^1,v^2)\right)_H=\left\langle F,(v^1,v^2)\right\rangle_{H^{\prime},H},\quad \forall (v^1,v^2)\in H.
		\end{align}
		In particular, if $F$ is given by
		\begin{align*}
			\left\langle F,(v^1,v^2)\right\rangle_{H^{\prime},H}=\Big(\left(L_1^*((z_{1,d}-u)1_{\O_{1,d}}),L_2^*((z_{2,d}-u)1_{\O_{2,d}})\right),(v^1,v^2)\Big)_H,
		\end{align*}
then $F\in H'.$ Therefore,  we obtain the existence and uniqueness of solution to problem \eqref{Nash}.
		
		Moreover, we have $$(\bar v^1,\bar v^2)=A^{-1}\left(L_1^*((z_{1,d}-u)1_{\O_{1,d}}),L_2^*((z_{2,d}-u)1_{\O_{2,d}})\right),$$
which implies that
		\[\|(\bar v^1,\bar v^2)\|_H\le \frac{1}{\delta}\left\|\left(L_1^*((z_{1,d}-u)1_{\O_{1,d}}),L_2^*((z_{2,d}-u)1_{\O_{2,d}})\right)\right\|_H.\]
		By the standard energy estimates,  we conclude that
		\begin{align*}
			\|u\|_{L^2(Q)}\le C(\|f\|_{L^2(\O\times (0,T))}+\|z_0\|_{L^2(\Omega)}).
		\end{align*} 
		Consequently, we obtain
		\begin{align}
			\|(\bar v^1,\bar v^2)\|_H\le C(1+\|f\|_{L^2(\O\times (0,T))}),
		\end{align}
		where $C$ is a positive constant depending on $\delta,$ $T,$ $\|z_0\|_{L^2(\Omega)},$ $ \|z_{1,d}\|_{L^2(\O_{1,d}\times(0,T))},$ $ \|z_{2,d}\|_{L^2(\O_{2,d}\times(0,T))}$.
	\end{proof}
	
	Notice that, from \eqref{Nashe} and the energy estimates, the solution $z$ of \eqref{eqn} associated to $f$ and $(\bar v^1(f),\bar v^2(f))$ satisfies
	\begin{align}\label{ineq}
		\|z\|_{L^2(0,T;\mathcal{M})}+\|z_t\|_{L^2(0,T;\mathcal{M}^{\prime})}\le C\left(\|f\|_{L^2(\O\times (0,T))}+1\right).
	\end{align}
	\subsection{The optimality system}
	In this subsection, we will deduce the optimality system that characterizes the Nash equilibrium for the cost functionals $J_i$.

	Multiplying problem \eqref{eq2} by a function $\phi^i$ and integrating by parts, we have
	\begin{align*}
		\iint_{\O_{i,d}}(z-z_{i,d})w^i\,dxdt=\iint_{\O_i\times (0,T)}v^i\phi^i\,dxdt,\quad \forall v^i\in H_i,
	\end{align*}
	where $\phi^i$ is the solution of the following problem
	\begin{align*}
		\begin{cases}
			-\phi_t^i-\Delta \phi^i-\frac{\mu}{|x|^2}\phi^i=(z-z_{i,d})1_{\O_{i,d}},&(x,t)\in Q,\\
			\phi^i=0,&(x,t)\in \Sigma,\\
			\phi^i(x,T)=0,&x\in \Omega.
		\end{cases}
	\end{align*}
	From (\ref{Nash2}), we conclude that $(\bar v^1,\bar v^2)$ is the Nash equilibrium if and only if
	\begin{align*}
		\iint_{\O_i\times (0,T)}(\phi^i+\alpha_i\bar v^i)v^i\,dxdt=0,\quad \forall v^i\in H_i,
	\end{align*}
which implies that
	\[\bar v^i=-\frac{1}{\alpha_i}\phi^i\Big|_{\O_i\times (0,T)},\quad i=1,2.\]
	Thus, we obtain the following optimality system:
	\begin{align}\label{nueq}
		\begin{cases}
			z_t-\Delta z-\frac{\mu}{|x|^2}z=f1_{\O}-\sum\limits_{i=1}^{2}\frac{1}{\alpha_i}\phi^i1_{\O_i},&(x,t)\in Q,\\
			-\phi^i_t-\Delta \phi^i-\frac{\mu}{|x|^2}\phi^i=(z-z_{i,d})1_{\O_{i,d}},&(x,t)\in Q,\\
			z=0,\phi^i=0,  &(x,t)\in \Sigma,\\
			z(x,0)=z^0(x),\phi^i(x,T)=0,&x\in \Omega.
		\end{cases}
	\end{align}
	
	We claim that if $\alpha_i\ (i=1,2)$ are large enough, there exists a unique solution to problem \eqref{nueq}. Denote by
	\begin{equation}\label{X}
		X=L^2(0,T;\mathcal{M})\cap H^1(0,T;\mathcal{M}^{\prime}),
	\end{equation}
	then we conclude from Lemma \ref{well} that there exists a unique weak solution $(z_w,\phi_w^1,\phi_w^2)\in X\times X\times X$ to problem
	\begin{align}\label{ouhe}
		\begin{cases}
			z_t-\Delta z-\frac{\mu}{|x|^2}z=f1_{\O}-\sum\limits_{i=1}^{2}\frac{1}{\alpha_i}\phi^i1_{\O_i},&(x,t)\in Q,\\
			-\phi^i_t-\Delta \phi^i-\frac{\mu}{|x|^2}\phi^i=(w-z_{i,d})1_{\O_{i,d}},&(x,t)\in Q,\\
			z=0,\phi^i=0,  &(x,t)\in \Sigma,\\
			z(x,0)=z^0(x),\phi^i(x,T)=0,&x\in \Omega
		\end{cases}
	\end{align} 
	for any $w\in L^2(Q)$. Therefore, we can define $S(w):=z_w$. From the standard energy estimate, we deduce that there exists a positive constant $C$ independent of $\alpha_i,$ such that
	\[\|S(w_1)-S(w_2)\|_{L^2(Q)}\le C\left(\frac{1}{\alpha_1}+\frac{1}{\alpha_2}\right)\|w_1-w_2\|_{L^2(Q)},\quad \forall w_1,w_2\in L^2(Q).\]
	Therefore, if $\alpha_i$ are large enough,  then the operator $S:L^2(Q)\to L^2(Q)$ is contractive, which implies that the operator $S$ possesses a unique fixed point. It's obvious that $(z,\phi_z^1,\phi_z^2)$ is the solution to problem \eqref{nueq} if and only if $z$ is the fixed point of $S$. Consequently, there exists exactly one weak solution to problem \eqref{nueq}.
	
	\subsection{Null controllability}
	The main objective of this subsection is to prove the null controllability of problem \eqref{nueq}. Based on the standard controllability-observability duality, the null controllability of problem \eqref{nueq} is reduced to an observability inequality for the following adjoint system given by
	\begin{align}
		\begin{cases}
			-\psi_t-\Delta \psi-\frac{\mu}{|x|^2}\psi=\sum_{i=1}^{2}\gamma^i1_{\O_{i,d}},&(x,t)\in Q,\\
			\gamma^i_t-\Delta \gamma^i-\frac{\mu}{|x|^2}\gamma^i=-\frac{1}{\alpha_i}\psi1_{\O_i} ,&(x,t)\in Q,\\
			\psi=0,\gamma^i=0, &(x,t)\in \Sigma,\\
			\psi(x,T)=\psi^T(x),\gamma^i(x,0)=0,&x\in\Omega.
		\end{cases}\label{adeq}
	\end{align}
	Thus, we have to prove the following observability inequality.
	\begin{pro}\label{pro}
		Suppose that $\alpha_i\ (i=1,2)$ are large enough.
		\begin{enumerate}[\rm{(}1\rm{)}]
			\item Assume that 
			\begin{align}\label{con1}
					\O_{1,d}=\O_{2,d},\,\,\,\,y_{1,d}=y_{2,d}
			\end{align}
			(and in this case we set $\O_d:=\O_{i,d}$ and $y_{d}:=y_{i,d}$), then there exists a constant $C>0,$ such that for any $\psi^T\in L^2(\Omega)$, the solution $(\psi,\gamma^1,\gamma^2)$ to problem \eqref{adeq} satisfies
			\begin{equation}\label{ob1}
				\|\psi(0)\|^2+\iint_{\O_{d}\times(0,T)}\rho^{-2}|\gamma^1+\gamma^2|^2\,dxdt\le C\iint_{\O\times(0,T)}|\psi|^2\,dxdt,
			\end{equation}
			where $\rho$ is defined as in \eqref{rho}.
			\item Assume that 
			\begin{equation}\label{con2}
				\O_{1,d}\cap\O\neq\O_{2,d}\cap\O,
			\end{equation}
			then a similar property holds with \eqref{ob1} replaced by
			\begin{align}\label{ob2} \|\psi(0)\|^2+\sum_{i=1}^{2}\iint_{\O_{i,d}\times(0,T)}\rho^{-2}|\gamma^i|^2\,dxdt\le C\iint_{\O\times(0,T)}|\psi|^2\,dxdt,
			\end{align}
			where $\rho$ is defined as in \eqref{rho3}.
		\end{enumerate}
	\end{pro}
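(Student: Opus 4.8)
The strategy is the standard Carleman-based duality argument adapted to the coupled adjoint system \eqref{adeq}, but with the twist that the Hardy potential forces us to use the degenerate weight function $\sigma$ from Lemma~\ref{carle}. I would begin by applying the global Carleman inequality of Lemma~\ref{carle} separately to the $\psi$-equation (which is backward in time, of the form \eqref{careq} with $g=\sum_i\gamma^i1_{\O_{i,d}}$) and, after a time reversal $t\mapsto T-t$, to each $\gamma^i$-equation (with source $-\frac1{\alpha_i}\psi1_{\O_i}$). This yields two families of weighted estimates: one bounding a weighted norm of $\psi$ (including the singular terms $s(\mu^*(N)-\mu)\iint\theta e^{-2\sigma}\psi^2/|x|^2$ and the interior terms $s^3\lambda^4\iint\theta^3\Phi^3e^{-2\sigma}\psi^2$) by $C\iint e^{-2\sigma}\big|\sum_i\gamma^i1_{\O_{i,d}}\big|^2 + Cs^3\lambda^4\iint_{\omega_0}\theta^3\Phi^3e^{-2\sigma}\psi^2$, and similarly for each $\gamma^i$ with observation terms localized in $\omega_0$. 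The observation set $\omega_0$ for the $\gamma^i$-estimate will be chosen inside $\O_i\cap\O$, while for the $\psi$-estimate it will be chosen inside $\O$.

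The core difficulty — and the place where the two cases \eqref{con1} and \eqref{con2} diverge — is absorbing the $\gamma$-observation terms $\iint_{\omega_0}\theta^3\Phi^3e^{-2\sigma}|\gamma^i|^2$ that come out of the $\gamma^i$-Carleman estimate, since these are \emph{not} among the quantities we are allowed to observe. In case~\eqref{con2}, because $\O_{1,d}\cap\O\neq\O_{2,d}\cap\O$, I can choose disjoint open sets so that the source term $g=\sum_i\gamma^i1_{\O_{i,d}}$ appearing in the $\psi$-equation, when restricted to a well-chosen subset, isolates each $\gamma^i$ on a region where it overlaps $\O$; combined with the $\gamma^i$-equation itself (which reads $-\frac1{\alpha_i}\psi1_{\O_i}$ on the right) one trades $\iint_{\omega_0}|\gamma^i|^2$ against $\iint_{\O}|\psi|^2$ plus lower-order weighted $\gamma$-terms that are absorbed by the left-hand side of the $\gamma$-Carleman estimate once $s,\lambda$ are large (here one uses that the weight $\Phi$ and $\theta$ are bounded on the relevant localization cutoff region, which is why we insist $0\notin\overline{\O}_{i,d}$ — the weight is non-degenerate there). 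In case~\eqref{con1}, where $\O_{1,d}=\O_{2,d}$ and $y_{1,d}=y_{2,d}$, only the \emph{sum} $\gamma:=\gamma^1+\gamma^2$ is controllable, so I would instead derive an equation for $\gamma$ alone: $\gamma_t-\Delta\gamma-\frac{\mu}{|x|^2}\gamma=-\big(\frac1{\alpha_1}1_{\O_1}+\frac1{\alpha_2}1_{\O_2}\big)\psi\cdot(\text{appropriately})$ — more precisely one keeps the two equations but observes that the $\psi$-source is $2\gamma 1_{\O_d}$ up to the equal data — apply Carleman to $\gamma$, and then the $\omega_0$-term is $\iint_{\omega_0}|\gamma|^2$, handled by the same trading trick using $\omega_0\subset\O_d\cap\O$.

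Once the observation terms are all reduced to $C\iint_{\O\times(0,T)}|\psi|^2$, the remaining steps are routine: add the $\psi$- and $\gamma$-Carleman estimates, fix $\lambda=\lambda_0$ and $s=s_0(\lambda_0)$ so all weights become fixed time-dependent functions bounded above and below by positive constants on any compact subinterval $[T/4,3T/4]$, obtaining a weighted observability inequality over $[T/4,3T/4]$; then use the energy dissipation of the forward-backward system \eqref{adeq} (an energy estimate for $\psi$ backward from $t$ and for $\gamma^i$ forward, exploiting that $\alpha_i$ large makes the coupling small) to propagate from $\|\psi(T/4)\|,\|\psi(3T/4)\|$-type bounds down to $\|\psi(0)\|^2$ on the left, and to reinstate the singular weight $\rho$ (defined in \eqref{rho}/\eqref{rho3} as an exponential of $\sigma$-type near $t=0$ and $t=T$) in front of the $\gamma$-terms on the left by tracking $e^{-2\sigma}$ through the computation rather than discarding it. The main obstacle I anticipate is precisely the absorption in the second paragraph — getting the constants right so that the lower-order $\gamma$-terms produced by the trading inequality are genuinely dominated by the left side of the $\gamma$-Carleman estimate, which requires a careful choice of the geometric cutoff functions and exploiting both the largeness of $s,\lambda$ and the largeness of $\alpha_i$ simultaneously.
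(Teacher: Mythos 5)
Your overall strategy coincides with the paper's: apply the Carleman inequality of Lemma \ref{carle} to each component of \eqref{adeq}, absorb the local observation terms in $\gamma$ by testing the $\psi$-equation against a cutoff times the weight times $\gamma$ (your ``trading trick'' is exactly the paper's estimates \eqref{11} and \eqref{p}), work with the sum $\bar\gamma=\gamma^1+\gamma^2$ in case \eqref{con1}, and finish with energy estimates to recover $\|\psi(0)\|^2$. Two points, however, are genuine gaps rather than omitted routine details.

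First, in case \eqref{con2} you claim you ``can choose disjoint open sets so that the source term $\sum_i\gamma^i1_{\O_{i,d}}$ isolates each $\gamma^i$.'' Condition \eqref{con2} only says $\O_{1,d}\cap\O\neq\O_{2,d}\cap\O$; it does not exclude the nested situation $\O_{1,d}\cap\widetilde{\O}\subset\O_{2,d}$ (the paper's alternative \eqref{O2}), in which \emph{every} open subset of $\O_{1,d}$ inside $\widetilde{\O}$ also lies in $\O_{2,d}$, so $\gamma^1$ can never be isolated and your trading step for $\gamma^1$ fails. The paper handles this subcase by running the argument for the pair $(\bar\gamma,\gamma^2)$ instead of $(\gamma^1,\gamma^2)$: $\bar\gamma$ is observed on $\omega_1\subset\O_{1,d}\cap\widetilde{\O}\subset\O_{2,d}$, where the $\psi$-source is exactly $\bar\gamma$, and $\gamma^2$ on $\omega_2$ with $\widetilde{\omega}_2\cap\O_{1,d}=\emptyset$; then $\gamma^1=\bar\gamma-\gamma^2$. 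Second, once the observation sets $\omega_1,\omega_2$ for the two followers are distinct, Lemma \ref{carle} forces two distinct weights $\sigma_1,\sigma_2$, since the weight is built from a $\Psi$ whose gradient is allowed to vanish only in the prescribed observation set. The $\psi$-estimate with weight $\sigma_i$ then produces the cross term $\iint e^{-2\sigma_i}|\gamma^{3-i}|^2$, which can only be absorbed by the left side of the $\gamma^{3-i}$-estimate (weighted by $e^{-2\sigma_{3-i}}$) if the two weights are comparable where $\gamma^{3-i}$ is supported. The paper arranges $\Psi_1=\Psi_2$ outside a small set $\widetilde{\O}$ as in \eqref{weight1}, cuts $\psi$ off by $\hat\eta$ to remove the region where the weights differ, and controls the resulting commutator term $\nabla\hat\eta\cdot\nabla\psi$ by Caccioppoli's inequality (Lemma \ref{Cap}). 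None of this machinery appears in your proposal, and without it the case-(2) estimate does not close. (A minor slip: in case \eqref{con1} the $\psi$-source is $\bar\gamma 1_{\O_d}$, not $2\gamma 1_{\O_d}$.)
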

	\begin{proof}
		We need to distinguish the proof into two cases.
		
		\noindent\textbf{Case 1:} We assume that \eqref{con1} holds.
		
		Let $s,\ \lambda,\ \Psi(x),\ \Phi(x),\ \theta(t)$ and $\sigma(x,t)$ are defined as in Section \ref{preliminary}. Obviously,  for any sufficiently large $\lambda,$ we have
		\begin{align*}
				\sigma(x,t)>0,\,\,\,(x,t)\in Q;
				\lim_{t\to 0^+}\sigma(x,t)=\lim_{t\to T^-}\sigma(x,t)=+\infty,\,\,\,x\in\Omega.
		\end{align*}
		Let $\omega_0$ be given as in Lemma \ref{lem1} satisfying  $\omega_0\subset \subset \widetilde{\omega}:=\O\cap \O_{d}$ and define a smooth cut-off function $\zeta$ on $\Omega$ by
		\[
		\begin{cases}
			\zeta=1,&x \in \omega_0,\\
			\zeta=0,&x \in \Omega\setminus \widetilde{\omega},\\
			\zeta\ge 0,&x\in \Omega.
		\end{cases}
		\] 
Denote by $\bar \gamma:=\gamma^1+\gamma^2$, then $\bar \gamma$ be a solution of the following problem
		\begin{align}\label{bar}
			\begin{cases}
				\bar\gamma_t-\Delta \bar\gamma-\frac{\mu}{|x|^2}\bar\gamma=-\sum_{i=1}^{2}\frac{1}{\alpha_i}\psi1_{\O_i},&(x,t)\in Q,\\
				\bar\gamma=0, &(x,t)\in \Sigma,\\
				\bar\gamma(x,0)=0,&x\in\Omega.
			\end{cases}
		\end{align}
		Thanks to $\theta(T-t)=\theta(t)$, we conclude from Lemma \ref{carle} that
		\begin{align*}
			s\iint_{Q}\theta e^{-2\sigma}|\psi|^2\,dxdt\le C\left(s^3\lambda^4\iint_{\omega_0\times(0,T)}\theta^3\Phi^3e^{-2\sigma}|\psi|^2\,dxdt+\iint_{\O_d}e^{-2\sigma}|\bar\gamma|^2\,dxdt\right)
		\end{align*}
		and
		\begin{align*}
			&s\iint_{Q}\theta e^{-2\sigma}|\bar\gamma|^2\,dxdt+s\lambda^2\iint_{\widetilde{\Omega}\times(0,T)}\theta \Phi e^{-2\sigma}|\nabla \bar\gamma|^2\,dxdt\\\le &C\left(s^3\lambda^4\iint_{\omega_0\times (0,T)}\theta^3\Phi^3e^{-2\sigma}|\bar\gamma|^2\,dxdt+\iint_{Q}e^{-2\sigma}\left|\sum_{i=1}^{2}\frac{1}{\alpha_i}\psi1_{\O_i}\right|^2\,dxdt\right).
		\end{align*}
From the above two inequalities, we deduce that
		\begin{align}\label{two}
			&s\iint_{Q}\theta e^{-2\sigma}|\psi|^2\,dxdt+s\iint_Q\theta e^{-2\sigma}|\bar\gamma|^2\,dxdt+s\lambda^2\iint_{\widetilde{\Omega}\times(0,T)}\theta\Phi e^{-2\sigma}|\nabla \bar\gamma|^2\,dxdt\notag\\
			\le &C\left(\sum_{i=1}^{2}\iint_{\O_i\times(0,T)}e^{-2\sigma}\frac{|\psi|^2}{\alpha_i^2}\,dxdt+\iint_{\O_d\times(0,T)}e^{-2\sigma}|\bar\gamma|^2\,dxdt\right)\notag\\
			&+C\left(s^3\lambda^4\iint_{\omega_0\times(0,T)}\theta^3\Phi^3e^{-2\sigma}|\psi|^2\,dxdt+s^3\lambda^4\iint_{\omega_0\times (0,T)}\theta^3\Phi^3e^{-2\sigma}|\bar\gamma|^2\,dxdt\right).
		\end{align}
In what follows, we need to estimate the last term in \eqref{two}.  To do this,  let $p:=\theta^3\Phi^3e^{-2\sigma}$, we observe that $\theta^{-\frac{1}{2}}e^{\sigma}p$, $\theta^{-\frac{1}{2}}e^{\sigma}|p_t|$, $\theta^{-\frac{1}{2}}e^{\sigma}|\nabla p|$ and $\theta^{-\frac{1}{2}}e^{\sigma}|\Delta p|$ are bounded. Hence
		\begin{equation}\label{11}
		\begin{aligned}
			&s^3\lambda^4\iint_{\omega_0\times(0,T)}p|\bar\gamma|^2\,dxdt
			\le s^3\lambda^4\iint_{\O_d}\zeta p|\bar\gamma|^2\,dxdt\\
			\le &s^3\lambda^4\iint_{Q}\zeta p\bar\gamma(-\psi_t-\Delta \psi-\frac{\mu}{|x|^2}\psi)\,dxdt
			\\
			\le
			&Cs^3\lambda^4\iint_{\O\times(0,T)}p\left(\frac{1}{\alpha_1}+\frac{1}{\alpha_2}\right)|\psi|^2\,dxdt+Cs^3\lambda^4\iint_{\widetilde{\omega}\times(0,T)}\left(p+|\nabla p|\right)|\nabla \bar\gamma||\psi|\,dxdt\\&+Cs^3\lambda^4\iint_{\widetilde{\omega}\times(0,T)}\left(p+|p_t|+|\nabla p|+|\Delta p|\right)|\bar\gamma||\psi|\,dxdt\\
			\le
			&\iint_{\widetilde{\omega}\times(0,T)}\theta e^{-2\sigma}|\bar\gamma|^2\,dxdt+\iint_{\widetilde{\omega}\times(0,T)}\theta\Phi e^{-2\sigma}|\nabla\bar\gamma|^2)\,dxdt+Cs^6\lambda^8\iint_{\O\times(0,T)}|\psi|^2\,dxdt.
		\end{aligned}
		\end{equation}		
		\if{Next we calculate that
		\begin{align*}
			\iint_{Q}\nabla(\zeta^2p\bar\gamma)\cdot\nabla \psi\,dxdt=-\iint_{Q}&\bigg(2|\nabla\zeta|^2p\bar\gamma\psi+2\zeta\Delta \zeta p\bar\gamma\psi+4\zeta\bar\gamma\psi\nabla \zeta\cdot \nabla p+4\zeta p\psi\nabla \zeta \cdot \nabla \bar\gamma\\
			&+\zeta^2\Delta p\bar\gamma \psi +2\zeta^2\psi\nabla p\cdot \nabla \bar\gamma+\zeta^2\psi \nabla p\cdot\nabla \bar\gamma+\zeta^2p\Delta \bar\gamma\psi\bigg)\,dxdt,
		\end{align*}
		then by \eqref{bar}, we see that
		\begin{align}\label{iden}
				\iint_{Q}&\left(\zeta^2p_t\bar\gamma\psi+\zeta^2p\bar\gamma_t\psi+\nabla(\zeta^2p\bar\gamma)\cdot\nabla \psi-\frac{\mu}{|x|^2}\zeta^2p\psi \bar\gamma\right)\,dxdt\notag\\=\iint_{Q}&\bigg(\zeta^2p_t\bar\gamma\psi-\big(\frac{1}{\alpha_1}1_{\O_1}+\frac{1}{\alpha_2}1_{\O_2}\big)\zeta^2p|\psi|^2-2|\nabla\zeta|^2p\bar\gamma\psi-2\zeta\Delta \zeta p\bar\gamma\psi-\zeta^2\psi \nabla p\cdot\nabla \bar\gamma\notag\\ &-4\zeta\bar\gamma\psi\nabla \zeta\cdot \nabla p-4\zeta p\psi\nabla \zeta \cdot \nabla \bar\gamma-\zeta^2\Delta p\bar\gamma \psi -2\zeta^2\psi\nabla p\cdot \nabla \bar\gamma\bigg)\,dxdt. 
		\end{align}
		Since $\zeta,\  \nabla\zeta,\  \Delta\zeta,\  p,\ p_t,\ \nabla p$ and $\Delta p$ are bounded functions, we 
		
		\begin{equation}
			\begin{aligned}
				\sum_{i=1}^{2}\iint_{\omega_i\times (0,t)}\theta^3\Phi_i^3e^{-2\sigma_i}|\gamma^i|^2\,dxdt\le
				C_\epsilon s^6\lambda^8\iint_{\O\times(0,T)}|\psi|^2\,dxdt\\+\epsilon\sum_{i=1}^{2}\left(\iint_{\O_{i,d}\times(0,T)}p^i|\gamma^i|^2\,dxdt+\iint_{\O_{i,d}\times (0,T)}p^i|\nabla \gamma^i|^2\,dxdt\right).\label{ga}
			\end{aligned}
		\end{equation}
		Combining (\ref{car}) and (\ref{ga}) we have
		\begin{align*}
			s\iint_{Q}\theta e^{-2\sigma_1}|\psi|^2\,dxdt+\sum_{i=1}^{2}s\iint_Q\theta e^{-2\sigma_i}|\gamma^i|^2\,dxdt\le C(s,\lambda)\iint_{\O\times(0,T)}|\psi|^2\,dxdt.
		\end{align*}}\fi
		Notice that $\O_d\cap\overline{B(0,1)}=\emptyset$, then $\widetilde{\omega}\subset\widetilde{\Omega}$. Combining inequality \eqref{two} with inequality \eqref{11}, we deduce
		\begin{equation}\label{Opsi}
			s\iint_{Q}\theta e^{-2\sigma}|\psi|^2\,dxdt+s\iint_Q\theta e^{-2\sigma}|\bar\gamma|^2\,dxdt\le Cs^6\lambda^8\iint_{\O\times(0,T)}|\psi|^2\,dxdt
		\end{equation}
for any sufficiently large $\alpha_i$ and $s.$

		Taking the inner product in $L^2(\Omega)$ of the first equation of problem (\ref{adeq}) with $\psi$, we obtain
		\begin{align*}
			-\frac{1}{2}\frac{d}{dt}\|\psi(t)\|^2+\int_{\Omega}\left(|\nabla\psi|^2-\frac{\mu}{|x|^2}|\psi|^2\right)\,dx=(\psi,\bar\gamma1_{\O_d}).
		\end{align*}
		For any $r,\ s\in [0,\frac{3T}{4}]$ with $r<s$, we apply Hardy inequality and H\"{o}lder inequality to yield
		\begin{align*}
			\|\psi(r)\|^2\le \|\psi(s)\|^2+\int_{r}^{s}\int_{\Omega}|\psi|^2\,dxdt+\int_{r}^{s}\int_{\O_d}|\bar \gamma|^2\,dxdt.
		\end{align*}
		Then the classical Gronwall inequality implies
		\begin{align*}
			\|\psi(r)\|^2\le C\left(\|\psi(s)\|^2+\int^{\frac{3T}{4}}_{0}\int_{\O_d}|\bar\gamma|^2\,dxdt\right).
		\end{align*}
		Integrating the above inequality over $[\frac{T}{4},\frac{3T}{4}]$ with respect to $s$, we have
		\begin{align}\label{54}
			\|\psi(r)\|^2\le C\left(\int_{\frac{T}{4}}^{\frac{3T}{4}}\int_{\Omega}|\psi|^2\,dxdt+\int^{\frac{3T}{4}}_0\int_{\O_d}|\bar\gamma|^2\,dxdt\right)
		\end{align}
for any $r\le\frac{T}{4}.$	
		
It follows from the standard energy methods and inequality \eqref{54} that
		\begin{align*}
			\int_{0}^{\frac{T}{4}}\int_{\Omega}|\bar\gamma|^2\,dxdt&\le C\left(\frac{1}{\alpha_1^2}+\frac{1}{\alpha_2^2}\right)\int_{0}^{\frac{T}{4}}\int_{\Omega}|\psi|^2\,dxdt\\
			&\le C\left(\frac{1}{\alpha_1^2}+\frac{1}{\alpha_2^2}\right)\left(\int_{\frac{T}{4}}^{\frac{3T}{4}}\int_{\Omega}|\psi|^2\,dxdt+\int^{\frac{3T}{4}}_0\int_{\O_d}|\bar\gamma|^2\,dxdt\right).
		\end{align*}
Letting $\alpha_i\ (i=1,2)$ be sufficiently large, we have
		\begin{align}\label{53}
			\int_{0}^{\frac{T}{4}}\int_{\O_d}|\bar\gamma|^2\,dxdt\le C\left(\int_{\frac{T}{4}}^{\frac{3T}{4}}\int_{\Omega}|\psi|^2\,dxdt+\int_{\frac{T}{4}}^{\frac{3T}{4}}\int_{\O_d}|\bar\gamma|^2\,dxdt\right).
		\end{align}
		Therefore,  we conclude from inequalities \eqref{54} -\eqref{53} and $r=0$ that
		\begin{align}\label{psi0}
			\|\psi(0)\|^2\le C\left(\int_{\frac{T}{4}}^{\frac{3T}{4}}\int_{\Omega}|\psi|^2dxdt+\int_{\frac{T}{4}}^{\frac{3T}{4}}\int_{\O_d}|\bar\gamma|^2\,dxdt\right).
		\end{align}
		Notice that $$\min_{(x,t)\in \Omega\times[\frac{T}{4},\frac{3T}{4}]}\theta e^{-2\sigma}>0$$ and define
		\begin{align}\label{rho}
			\rho:=e^{\sigma}\theta^{-\frac{1}{2}},
		\end{align}
		it follows from inequality \eqref{Opsi} and inequality \eqref{psi0} that
		\begin{equation*}
			\begin{split}
				\|\psi(0)\|^2+\iint_{\O_d\times(0,T)}\rho^{-2}|\bar\gamma|^2\,dxdt&\le C\left(\int_{\frac{T}{4}}^{\frac{3T}{4}}\int_{\Omega}|\psi|^2dxdt+\iint_{\O_d\times(0,T)}\theta e^{-2\sigma}|\bar\gamma|^2\,dxdt\right)\\
				&\le C\iint_{\O\times(0,T)}|\psi|^2\,dxdt.
			\end{split}
		\end{equation*}
\noindent\textbf{Case 2:} Now, we assume that \eqref{con2} holds.

Let $\widetilde{\O}\subset\subset\O$ be a nonempty open set such that $\O_{i,d}\cap\widetilde{O}\neq\emptyset$ for $i=1,2$. Then either 
\begin{equation}\label{O1}
	(\O_{1,d}\cap\widetilde{O})\setminus\O_{2,d}\neq\emptyset\quad \text{and}\quad (\O_{2,d}\cap\widetilde{O})\setminus\O_{1,d}\neq\emptyset
\end{equation}
or 
\begin{equation}\label{O2}
	\O_{i,d}\cap\widetilde{O}\subset\O_{3-i,d},\quad i=1\ \text{or}\ 2.
\end{equation}

1. If \eqref{O1} holds, then there exist some nonempty open subsets $\omega_i$ and $\widetilde{\omega}_i$ satisfying $\omega_i\subset\subset\widetilde{\omega}_i\subset\subset\widetilde{\O}\cap\O_{i,d}$ with $\widetilde{\omega}_{i}\cap\O_{3-i,d}=\emptyset$ for $i=1,2$. From Lemma \ref{lem1}, we conclude that there exist two smooth functions $\Psi_i(x)$ satisfying
\begin{align}\label{weight1}
	\begin{cases}
		\Psi_i(x)=\ln(|x|),& \quad x\in B(0,1),\\
		\Psi_i(x)=0, &\quad x\in \Gamma,\\
		\Psi_i(x)> 0,&\quad x\in \widetilde{\Omega},\\
		|\nabla \Psi_i|\ge \delta,&\quad x\in \overline{\Omega} \setminus \omega_i,\\
		\Psi_1(x)=\Psi_2(x),&\quad x\in\Omega\setminus\widetilde{\O}
	\end{cases}
\end{align}
and 
\begin{equation}\label{weight2}
	\sup_{x\in\Omega} \Psi_1=\sup_{x\in \Omega} \Psi_2.
\end{equation}
As in Section \ref{preliminary}, we introduce the weight functions
 \begin{equation}\label{weight3}
 	\theta(t)=t^{-3}(T-t)^{-3},\quad \sigma_i(x,t)=s\theta(t)(e^{2\lambda\sup \Psi_i}-\frac{1}{2}|x|^2-e^{\lambda\Psi_i}),\quad \Phi_i(x)=e^{\lambda \Psi_i(x)},
 \end{equation} 
 where $s$ and $\lambda$ are positive constants. Furthermore, when $\lambda$ is large enough we have
\begin{align*}
	\begin{cases}
		\sigma_i(x,t)>0,&(x,t)\in Q,\\
		\lim_{t\to 0^+}\sigma_i(x,t)=\lim_{t\to T^-}\sigma_i(x,t)=+\infty,&x\in\Omega.
	\end{cases}
\end{align*}
Assume that the open sets $\O_0,\ \widetilde{\O}_1$ and $\widetilde{\O}_2$ satisfy $\widetilde{\O}\subset\subset\widetilde{\O}_1\subset\subset\widetilde{\O}_2\subset\subset\O_0\subset\subset\O$ and $0\notin \overline{\O}_0\setminus \widetilde{\O}.$ Let $\hat\eta$ be a cut-off function satisfying
\begin{equation}\label{eta}
	\begin{cases}
		\hat\eta=1,&x \in \Omega\setminus\widetilde{\O}_2,\\
		\hat\eta=0,&x \in \widetilde{\O}_1,\\
		0\le \hat\eta\le 1,&x\in \Omega.
	\end{cases}
\end{equation}
Then $\hat{\psi}=\hat\eta \psi$ is the solution to problem
\begin{align}
	\begin{cases}
		-\hat\psi_t-\Delta \hat\psi-\frac{\mu}{|x|^2}\hat\psi=\sum_{i=1}^{2}\hat\eta\gamma^i1_{\O_{i,d}}-2\nabla\hat\eta\cdot\nabla\psi-\psi\Delta\hat\eta,&(x,t)\in Q,\\
		\psi=0, &(x,t)\in \Sigma,\\
		\psi(x,T)=\hat\zeta\psi^T,&x\in\Omega.
	\end{cases}
\end{align}
From Lemma \ref{carle}, we conclude that
\begin{align}\label{332}
	s\iint_{Q}\theta e^{-2\sigma_i}|\hat\psi|^2\,dxdt
	\le& C\iint_{Q}e^{-2\sigma_i}\bigg(\sum_{j=1}^{2}\hat\eta\gamma^j1_{\O_{j,d}}\bigg)^2\,dxdt+C\iint_{Q}e^{-2\sigma_i}|\nabla\hat\eta|^2|\nabla \psi|^2\,dxdt\notag\\&+C\iint_{Q}e^{-2\sigma_i}|\Delta\hat\eta|^2|\psi|^2\,dxdt+Cs^3\lambda^4\iint_{\omega_i\times(0,T)}\theta^3\Phi_1^3e^{-2\sigma_i}|\hat\psi|^2\,dxdt
\end{align}
and
\begin{align}\label{333}
	&s\iint_{Q}\theta e^{-2\sigma_i}|\gamma^i|^2\,dxdt+s\lambda^2\iint_{\widetilde{\Omega}\times (0,T)}\theta \Phi_ie^{-2\sigma_i}|\nabla \gamma^i|^2\,dxdt\notag\\
	\le &C\left(s^3\lambda^4\iint_{\omega_i\times (0,T)}\theta^3\Phi_i^3e^{-2\sigma_i}|\gamma^i|^2\,dxdt+\iint_{Q}e^{-2\sigma_i}\frac{|\psi|^2}{\alpha_i^2}1_{\O_i}\,dxdt\right).
\end{align}
By virtue of inequalities \eqref{332}-\eqref{333} and 
\[\iint_{Q}\theta e^{-2\sigma_i}|\hat\psi|^2\,dxdt\ge \iint_Q\theta e^{-2\sigma_i}|\psi|^2\,dxdt-C\iint_{\O\times(0,T)}|\psi|^2\,dxdt,\]
we obtain
\begin{align}\label{car}
	&\sum_{i=1}^{2}\left(s\iint_{Q}\theta e^{-2\sigma_i}|\psi|^2\,dxdt+s\iint_Q\theta e^{-2\sigma_i}|\gamma^i|^2\,dxdt+s\lambda^2\iint_{\widetilde{\Omega}\times(0,T)}\theta\Phi_ie^{-2\sigma_i}|\nabla \gamma^i|^2\,dxdt\right)\notag\\
	\le &C\Bigg( s^3\lambda^4\iint_{\O\times(0,T)}|\psi|^2\,dxdt+\sum_{i=1}^{2}\bigg( \iint_{Q}e^{-2\sigma_i}|\nabla \hat\eta|^2|\nabla\psi|^2\,dxdt+\iint_{Q}e^{-2\sigma_i}\Big(\sum_{j=1}^{2}\hat\eta\gamma^j1_{\O_{j,d}}\Big)^2\,dxdt\bigg)\Bigg) \notag\\
	&+C\sum_{i=1}^{2} \left(\iint_{Q}e^{-2\sigma_i}\frac{|\psi|^2}{\alpha_i^2}\,dxdt+s^3\lambda^4\iint_{\omega_i\times(0,T)}\theta^3\Phi_i^3e^{-2\sigma_i}|\gamma^i|^2\,dxdt\right) .
\end{align}
It follows from Lemma \ref{Cap} that
\begin{equation}\label{cap}
	\begin{aligned}
		&\iint_{Q}e^{-2\sigma_i}|\nabla \hat\eta|^2|\nabla\psi|^2\,dxdt\le C\iint_{(\widetilde{\O}_2\setminus\widetilde\O_1)\times(0,T)}e^{-2\sigma_i}|\nabla\psi|^2\,dxdt\\
		\le &C\iint_{(\O_0\setminus\widetilde\O)\times(0,T)}e^{-\sigma_i}|\psi|^2\,dxdt+\iint_{(\O_0\setminus\widetilde\O)\times(0,T)}e^{-2\sigma_i}|\sum_{j=1}^2\gamma^j1_{\O_{j,d}}|^2\,dxdt.
\end{aligned}
\end{equation}
Notice that $$\sigma_1(x,t)=\sigma_2(x,t),\quad \text{for}\ x\in \Omega\setminus\widetilde{\O},\ t\in (0,T),$$
which implies that
\begin{equation}\label{gam}
	\iint_{Q}e^{-2\sigma_i}\Big(\sum_{j=1}^{2}\hat\eta\gamma^j1_{\O_{j,d}}\Big)^2\,dxdt\le C\sum_{i=1}^{2}\iint_{Q}e^{-2\sigma_i}|\gamma^i|^2\,dxdt
\end{equation}
and
\begin{equation}
	\iint_{(\O_0\setminus\widetilde\O)\times(0,T)}e^{-2\sigma_i}|\sum_{j=1}^2\gamma^j1_{\O_{j,d}}|^2\,dxdt\le C\sum_{i=1}^{2}\iint_{Q}e^{-2\sigma_i}|\gamma^i|^2\,dxdt.
\end{equation}
To eliminate the last term in the right-hand side in \eqref{car}, we also define the smooth function $\zeta_i$ on $\Omega$ as follows:
\[
\begin{cases}
	\zeta_i=1,&x \in \omega_i,\\
	\zeta_i=0,&x \in \Omega\setminus \widetilde{\omega}_i,\\
	\zeta_i\ge 0,&x\in \Omega.
\end{cases}
\]
In view of $\widetilde{\omega}_i\cap\O_{3-i,d}=\emptyset$, we can deduce
	\begin{align}\label{p}
		\nonumber&s^3\lambda^4\sum_{i=1}^{2}\iint_{\omega_i\times(0,T)}p^i|\gamma^i|^2\,dxdt
		\nonumber\le s^3\lambda^4\sum_{i=1}^{2}\iint_{Q}\zeta_ip^i|\gamma^i|^2\,dxdt\\
		\nonumber\le &s^3\lambda^4\sum_{i=1}^{2}\iint_{Q}\zeta_ip^i\gamma^i(-\psi_t-\Delta \psi-\frac{\mu}{|x|^2}\psi)\,dxdt\\
		\nonumber\le &s^3\lambda^4\sum_{i=1}^{2}\iint_{\O\times(0,T)}\frac{p^i}{\alpha_i}|\psi|^2\,dxdt+s^3\lambda^4\sum_{i=1}^{2}\iint_{\widetilde{\omega}_i\times(0,T)}(p^i+|\nabla p^i|)|\nabla \gamma^i||\psi|\,dxdt\\
		\nonumber&+s^3\lambda^4\sum_{i=1}^{2}\iint_{\widetilde{\omega}_i\times(0,T)}(p^i+|p^i_t|+|\nabla p^i|+|\Delta p^i|)|\psi||\gamma^i|\,dxdt\\
		\nonumber\le&\sum_{i=1}^{2}\iint_{\widetilde{\omega}_i\times(0,T)}\theta e^{-2\sigma_i}|\gamma^i|^2\,dxdt+\sum_{i=1}^{2}\iint_{\widetilde{\omega}_i\times(0,T)}\theta\Phi_i e^{-2\sigma_i}|\nabla\gamma^i|^2\,dxdt\\&+Cs^6\lambda^8\iint_{\O\times(0,T)}|\psi|^2\,dxdt,
	\end{align}
where $p^i=\theta^3\Phi_i^3e^{-2\sigma_i}$.

By taking $\alpha_i,$ $s$ large enough and combining \eqref{car}\textendash\eqref{p}, we obtain
\begin{align}\label{12}
	\sum_{i=1}^{2}\left(s\iint_{Q}\theta e^{-2\sigma_i}|\psi|^2\,dxdt+s\iint_Q\theta e^{-2\sigma_i}|\gamma^i|^2\,dxdt\right)\le Cs^6\lambda^8\iint_{\O\times(0,T)}|\psi|^2\,dxdt.
\end{align}
By carrying out the similar proof of inequality \eqref{psi0}, we deduce that
\begin{align}\label{13}
	\|\psi(0)\|^2\le C\left(\int_{\frac{T}{4}}^{\frac{3T}{4}}\int_{\Omega}|\psi|^2\,dxdt+\sum_{i=1}^{2}\int_{\frac{T}{4}}^{\frac{3T}{4}}\int_{\O_{i,d}}|\gamma^i|^2\,dxdt\right).
\end{align}
Define
\begin{equation}\label{rho3}
	\rho:=\max\{e^{\sigma_1}\theta^{-\frac{1}{2}},e^{\sigma_2}\theta^{-\frac{1}{2}}\},
\end{equation}
we infer from inequalities \eqref{12}-\eqref{13} that
\begin{equation*}
	\|\psi(0)\|^2+\sum_{i=1}^{2}\iint_{\O_{i,d}\times(0,T)}\rho^{-2}|\gamma^i|^2\,dxdt\le C\iint_{\O\times(0,T)}|\psi|^2\,dxdt.
\end{equation*}

2. Assume that \eqref{O2} holds. Without loss of generality, we can assume that $\O_{1,d}\cap\widetilde{\O}\subset\O_{2,d}$.
Then there exist the nonempty open sets $\omega_i$ and $\widetilde{\omega}_i$ satisfying $\omega_i\subset\subset\widetilde{\omega}_i\subset\subset\widetilde{\O}\cap\O_{i,d}$ with $\widetilde{\omega}_{2}\cap\O_{1,d}=\emptyset$. Let $\Psi_i$, $\theta$, $\sigma_i$, $\Phi_i,$  $\hat\eta$ be given as in \eqref{weight1}-\eqref{eta} and denote by $\overline{\gamma}:=\gamma^1+\gamma^2$.  By carrying out the similar proof of inequality \eqref{two}, \eqref{car} and \eqref{cap}, we conclude that
\begin{align}\label{1351}
	&\sum_{i=1}^{2}s\iint_{Q}\theta e^{-2\sigma_i}|\psi|^2\,dxdt+s\iint_{Q}\theta e^{-2\sigma_1}|\bar\gamma|^2\,dxdt+s\lambda^2\iint_{\widetilde{\Omega}\times(0,T)}\theta\Phi_1 e^{-2\sigma_1}|\nabla \bar\gamma|^2\,dxdt\notag\\
	&+3s\iint_Q\theta e^{-2\sigma_2}|\gamma^2|^2\,dxdt+3s\lambda^2\iint_{\widetilde{\Omega}\times(0,T)}\theta\Phi_2e^{-2\sigma_2}|\nabla \gamma^2|^2\,dxdt\notag\\
	\le &C\Bigg( s^3\lambda^4\iint_{\O\times(0,T)}|\psi|^2\,dxdt+\sum_{i=1}^{2}\iint_{Q}e^{-2\sigma_i}\Big(\sum_{j=1}^{2}\hat\eta\gamma^j1_{\O_{j,d}}\Big)^2\,dxdt\Bigg) \notag\\
	&+C\left(\iint_{Q}e^{-2\sigma_1}\sum_{i=1}^{2}\frac{|\psi|^2}{\alpha_i^2}\,dxdt+s^3\lambda^4\iint_{\omega_1\times (0,T)}\theta^3\Phi_1^3e^{-2\sigma_1}|\bar\gamma|^2\,dxdt\right)\notag\\
	&+C\left( \iint_{Q}e^{-2\sigma_2}\frac{|\psi|^2}{\alpha_2^2}\,dxdt+s^3\lambda^4\iint_{\omega_2\times(0,T)}\theta^3\Phi_2^3e^{-2\sigma_2}|\gamma^2|^2\,dxdt\right).
\end{align}
Arguing as in the proof of inequality \eqref{gam}, we obtain
\begin{equation}
	\sum_{i=1}^{2}\iint_{Q}e^{-2\sigma_i}\Big(\sum_{j=1}^{2}\hat\eta\gamma^j1_{\O_{j,d}}\Big)^2\,dxdt\le C\iint_Qe^{-2\sigma_1}|\bar\gamma|^2\,dxdt+C\iint_Qe^{-2\sigma_2}|\gamma^2|^2\,dxdt.
\end{equation}
By performing the similar proof of \eqref{11} and \eqref{p}, we deduce
\begin{align}\label{515}
	&\iint_{\omega_1\times(0,T)}\theta^3\Phi_1^3e^{-2\sigma_1}|\bar\gamma|^2\,dxdt+s^3\lambda^4\iint_{\omega_2\times(0,T)}\theta^3\Phi_2^3e^{-2\sigma_2}|\gamma^2|^2\,dxdt\notag\\
	\le& Cs^6\lambda^8\iint_{\O\times(0,T)}|\psi|^2\,dxdt+\iint_{\widetilde{\omega}_1\times(0,T)}\theta e^{-2\sigma_1}|\bar\gamma|^2\,dxdt+\iint_{\widetilde{\omega}_1\times(0,T)}\theta\Phi_1 e^{-2\sigma_1}|\nabla\bar\gamma|^2\,dxdt\notag\\
	&+\iint_{\widetilde{\omega}_2\times(0,T)}\theta e^{-2\sigma_2}|\gamma^2|^2\,dxdt+\iint_{\widetilde{\omega}_2\times(0,T)}\theta\Phi_2 e^{-2\sigma_2}|\nabla\gamma^2|^2\,dxdt.
\end{align} 
Thus, for sufficiently large $\alpha_i$ and $s$, it follows from \eqref{1351}\textendash\eqref{515} that 
\begin{align}\label{14}
	\sum_{i=1}^{2}\left(s\iint_{Q}\rho^{-2}|\psi|^2\,dxdt+s\iint_Q\rho^{-2}|\gamma^i|^2\,dxdt\right)\le Cs^6\lambda^8\iint_{\O\times(0,T)}|\psi|^2\,dxdt,
\end{align}
where $\rho(x,t)$ is defined as \eqref{rho3}.  

Along with \eqref{13} -\eqref{14}, we obtain
\begin{equation*}
	\|\psi(0)\|^2+\sum_{i=1}^{2}\iint_{\O_{i,d}\times(0,T)}\rho^{-2}|\gamma^i|^2\,dxdt\le C\iint_{\O\times(0,T)}|\psi|^2\,dxdt.
\end{equation*}
\end{proof}

In the sequel, we will consider problem \eqref{eq1} with $F\equiv 0$ and prove the following result.
\begin{thm}\label{th1}
	Assume that either \eqref{con1} or \eqref{con2} holds, $F\equiv 0$ and the constants $\alpha_i\ (i=1,2)$ are large enough. If $\bar y$ is the unique solution to problem \eqref{tr} with the initial state $\bar y^0 \in L^2(\Omega)$ satisfying
	\begin{align}\label{ass}
		\iint_{\O_{i,d}\times(0,T)}\rho^2|\bar{y}-y_{i,d}|^2\,dxdt< \infty,\quad i=1,2,
	\end{align}
	where the weight function $\rho$ is the same as in Proposition \ref{pro}. Then for any $y^0\in L^2(\Omega)$, there exists a control $\hat f\in L^2(\O\times(0,T))$ and an associated Nash equilibrium $(\bar{v}^1,\bar{v}^2),$ such that the solution to \textup{(\ref{eq1})} satisfies \eqref{exa}. Moreover, $\hat f$ is the unique solution to the extremal problem \eqref{op}\textendash\eqref{exa}.
\end{thm}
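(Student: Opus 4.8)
The plan is to use the classical duality (Fursikov--Imanuvilov) scheme, the heavy analytic input — the weighted observability inequality of Proposition~\ref{pro} — being already available. By the reduction leading to the optimality system \eqref{nueq} and the null-controllability condition \eqref{nullctrl}, with $z:=y-\bar y$, $z^0:=y^0-\bar y^0$ and $z_{i,d}:=y_{i,d}-\bar y$ (so $z_{i,d}\in L^2(\O_{i,d}\times(0,T))$, since $\bar y\in L^2(Q)$), the assertion amounts to producing $\hat f\in L^2(\O\times(0,T))$ that drives \eqref{nueq} (uniquely solvable when the $\alpha_i$ are large) to $z(\cdot,T)=0$; the associated Nash equilibrium is then $\bar v^i=-\frac1{\alpha_i}\phi^i|_{\O_i\times(0,T)}$ and the state bound is \eqref{ineq}. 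Pairing the $z$--equation of \eqref{nueq} with $\psi$ and each $\phi^i$--equation with $\gamma^i$, integrating over $Q$, and invoking the adjoint system \eqref{adeq} (the cross terms $\tfrac1{\alpha_i}\iint_{\O_i}\psi\phi^i$ cancel), I obtain the duality identity
\begin{equation}\label{duali}
	\big(z(T),\psi^T\big)=\iint_{\O\times(0,T)}f\psi\,dxdt+\big(z^0,\psi(0)\big)-\sum_{i=1}^{2}\iint_{\O_{i,d}\times(0,T)}\gamma^iz_{i,d}\,dxdt ,
\end{equation}
valid for every $\psi^T\in L^2(\Omega)$, where $(\psi,\gamma^1,\gamma^2)$ solves \eqref{adeq} with final datum $\psi^T$ and $(z,\phi^1,\phi^2)$ solves \eqref{nueq} with leader $f$.

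I would then let $\mathcal H$ be the completion of $L^2(\Omega)$ for the seminorm $\|\psi^T\|_{\mathcal H}:=\big(\iint_{\O\times(0,T)}|\psi|^2\,dxdt\big)^{1/2}$, so that $\psi^T\mapsto\psi|_{\O\times(0,T)}$ extends to a linear isometry of $\mathcal H$ onto a closed subspace $V\subset L^2(\O\times(0,T))$, while Proposition~\ref{pro} makes $\psi^T\mapsto\psi(0)$ and $\psi^T\mapsto\rho^{-1}\gamma^i|_{\O_{i,d}\times(0,T)}$ extend continuously to $\mathcal H$. On $\mathcal H$ I consider
\begin{equation}\label{Jfun}
	\mathcal J(\psi^T):=\frac12\iint_{\O\times(0,T)}|\psi|^2\,dxdt+\big(z^0,\psi(0)\big)-\sum_{i=1}^{2}\iint_{\O_{i,d}\times(0,T)}\gamma^iz_{i,d}\,dxdt .
\end{equation}
Writing $\iint_{\O_{i,d}}\gamma^iz_{i,d}=\iint_{\O_{i,d}}(\rho^{-1}\gamma^i)(\rho z_{i,d})$, Proposition~\ref{pro} together with the admissibility hypothesis \eqref{ass} (note $z_{i,d}=y_{i,d}-\bar y$, hence $\rho z_{i,d}\in L^2(\O_{i,d}\times(0,T))$) bounds the two linear terms of $\mathcal J$ by $C\big(\|z^0\|+\sum_i\|\rho z_{i,d}\|\big)\|\psi^T\|_{\mathcal H}$; thus $\mathcal J$ is continuous, strictly convex and coercive on $\mathcal H$ and admits a unique minimizer $\hat\psi^T$ (if $\|\psi^T\|_{\mathcal H}=0$ the same inequality forces $\psi(0)=0$ and $\gamma^i|_{\O_{i,d}}=0$, so $\mathcal J$ descends to the quotient). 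Its Euler--Lagrange equation is
\begin{equation}\label{ELe}
	\iint_{\O\times(0,T)}\hat\psi\,\psi\,dxdt+\big(z^0,\psi(0)\big)-\sum_{i=1}^{2}\iint_{\O_{i,d}\times(0,T)}\gamma^iz_{i,d}\,dxdt=0,\qquad\forall\,\psi^T\in\mathcal H .
\end{equation}

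I then set $\hat f:=\hat\psi|_{\O\times(0,T)}\in L^2(\O\times(0,T))$, with $\|\hat f\|_{L^2(\O\times(0,T))}=\|\hat\psi^T\|_{\mathcal H}$ controlled by coercivity, and take $(z,\phi^1,\phi^2)$ to be the solution of \eqref{nueq} with this $\hat f$. Comparing \eqref{duali} with \eqref{ELe} yields $\big(z(T),\psi^T\big)=0$ for all $\psi^T\in L^2(\Omega)$, i.e.\ $z(\cdot,T)=0$; hence the state of \eqref{eq1} with leader $\hat f$ and the Nash equilibrium $\bar v^i=-\frac1{\alpha_i}\phi^i|_{\O_i\times(0,T)}$ (which exists by Proposition~\ref{31}) satisfies \eqref{exa}. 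For the minimality assertion, let $f$ be any admissible leader, i.e.\ one whose controlled state obeys \eqref{exa}, equivalently $z(\cdot,T)=0$; then \eqref{duali} gives $\iint_{\O\times(0,T)}f\psi\,dxdt=-\big(z^0,\psi(0)\big)+\sum_i\iint_{\O_{i,d}}\gamma^iz_{i,d}$ for every $\psi^T\in\mathcal H$, a right-hand side independent of $f$; consequently $\iint_{\O\times(0,T)}(f-\hat f)\psi\,dxdt=0$ for all $\psi^T\in\mathcal H$, i.e.\ $f-\hat f\perp V$ in $L^2(\O\times(0,T))$. Since $\hat f\in V$, the Pythagorean identity gives $\|f\|_{L^2(\O\times(0,T))}^2=\|\hat f\|_{L^2(\O\times(0,T))}^2+\|f-\hat f\|_{L^2(\O\times(0,T))}^2$, so $J(f)\ge J(\hat f)$ with equality only if $f=\hat f$; since the admissible set is nonempty (it contains $\hat f$), $\hat f$ is the unique solution of \eqref{op}--\eqref{exa}.

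The steps that I expect to require real care are the rigorous derivation of the duality identity \eqref{duali} at the weak-solution level and its stability under the density completion, and — conceptually the crux — the coercivity of $\mathcal J$, which is exactly where the \emph{weighted} form of the observability inequality in Proposition~\ref{pro} and the admissibility hypothesis \eqref{ass} on $\bar y-y_{i,d}$ become indispensable: without \eqref{ass} the linear term $\sum_i\iint_{\O_{i,d}}\gamma^iz_{i,d}$ need not even be finite and the whole construction collapses. Everything else reduces to standard convex analysis together with the energy estimates underlying Lemma~\ref{well}.
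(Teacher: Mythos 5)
Your argument is correct and rests on the same two pillars as the paper's proof --- the duality identity between the optimality system \eqref{nueq} and the adjoint system \eqref{adeq}, and the weighted observability inequality of Proposition \ref{pro} --- but the variational step is organized differently. The paper follows the penalized route: for each $\epsilon>0$ it minimizes $F_\epsilon(\psi^T)=\frac12\iint_{\O\times(0,T)}|\psi|^2+\epsilon\|\psi^T\|+(z^0,\psi(0))-\sum_i\iint z_{i,d}\gamma^i$ over $L^2(\Omega)$, shows $\|z_\epsilon(T)\|\le\epsilon$, and extracts a weak limit $\hat f$ of the bounded family $f_\epsilon=\psi_\epsilon 1_{\O\times(0,T)}$, recovering minimality afterwards from the limiting identities \eqref{58}--\eqref{5}. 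You instead drop the $\epsilon$-term, complete $L^2(\Omega)$ under the observation seminorm, and minimize $\mathcal J$ directly on $\mathcal H$, reading off exact controllability from the Euler--Lagrange equation and minimality from the orthogonality $f-\hat f\perp V$ plus Pythagoras. Your route avoids the $\epsilon\to0$ limit and gives the optimality of $\hat f$ more transparently, at the price of working in the abstract completion $\mathcal H$, whose elements need not be functions on $\Omega$; the paper's route stays in $L^2(\Omega)$ throughout. One small point to tidy up: under hypothesis \eqref{con1} Proposition \ref{pro} controls only $\rho^{-1}(\gamma^1+\gamma^2)$ on $\O_d$, not each $\rho^{-1}\gamma^i$ separately, so the continuous extension you invoke is of the map $\psi^T\mapsto\rho^{-1}(\gamma^1+\gamma^2)$; this suffices because in that case $z_{1,d}=z_{2,d}$ and the linear term in $\mathcal J$ collapses to $\iint_{\O_d\times(0,T)}(\gamma^1+\gamma^2)z_d\,dxdt$, exactly as in the paper's treatment of Case 1.
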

\begin{proof}
	Combining (\ref{nueq}) and (\ref{adeq}), we see that
	\begin{align}\label{formula}
		\iint_{\O\times(0,T)}f\psi \,dxdt=\int_{\Omega}z(T)\psi^T\,dx-\int_{\Omega}z^0\psi(0)\,dx+\sum_{i=1}^{2}\iint_{\O_{i,d}\times (0,T)}z_{i,d}\gamma^i\,dxdt.
	\end{align}
	Note that the null controllability of \eqref{nueq} holds if and only if for each $z^0\in L^2(\Omega)$ , there exists a $f\in L^2(\O\times (0,T)),$ such that
	\begin{align}\label{nu}
		\iint_{\O\times(0,T)}f\psi \,dxdt=-\int_{\Omega}z^0\psi(0)\,dx+\sum_{i=1}^{2}\iint_{\O_{i,d}\times (0,T)}z_{i,d}\gamma^i\,dxdt,\quad \forall \psi^T\in L^2(\Omega).
	\end{align}
	
	For each $z^0\in L^2(\Omega)$ and any $\epsilon>0$, we introduce  the functional $F_{\epsilon}:L^2(\Omega)\to \R$ defined by
	\begin{align*}
		F_{\epsilon}(\psi^T)=\frac{1}{2}\iint_{\O\times (0,T)}|\psi|^2\,dxdt+\epsilon \|\psi^T\|+\left(z^0,\psi(0)\right)-\sum_{i=1}^{2}\iint_{\O_{i,d}\times (0,T)}z_{i,d}\gamma^i\,dxdt.
	\end{align*}
	From the usual energy estimate and the linearity of equation \eqref{nueq}, we could obtain that $F_{\epsilon}$ is continuous, strictly convex and differentiable except $0$. As a consequence of Proposition \ref{pro}, $F_{\epsilon}$ is also coercive in $L^2(\Omega)$. Thus, it possesses a unique minimizer $\psi^T_{\epsilon}$. Let us denote by $(\psi_{\epsilon},\gamma^1_{\epsilon},\gamma^2_{\epsilon})$ the solution of (\ref{adeq}) associated with $\psi_{\epsilon}^T$.
	
	Let $f_{\epsilon}=\psi_{\epsilon} 1_{\O\times(0,T)}$ and denote by $(z_{\epsilon},\phi^1_{\epsilon},\phi^2_{\epsilon})$ the solution of \eqref{nueq}. If $\psi_{\epsilon}^T=0$, substituting $f_{\epsilon}=0$ in \eqref{formula}, we obtain
	\begin{align}\label{1}
		\int_{\Omega}z_{\epsilon}(T)\psi^T\,dx-\int_{\Omega}z^0\psi(0)\,dx=-\sum_{i=1}^2\iint_{\O_{i,d}\times (0,T)}z_{i,d}\gamma^i\,dxdt,\quad \forall\psi^T\in L^2(\Omega).
	\end{align} 
	Since $0$ is the minimizer of $F_{\epsilon}$ and $F_{\epsilon}(0)=0$, we see
	\begin{align}\label{33}
		\lim_{h \to 0^+}\frac{F_{\epsilon}(h\psi^T)}{h}=\epsilon\|\psi^T\|+\left(z^0,\psi(0)\right)-\sum_{i=1}^{2}\iint_{\O_{i,d}\times (0,T)}z_{i,d}\gamma^i\,dxdt\ge 0
	\end{align}
	for any $\psi^T\in L^2(\Omega)$. Then from \eqref{1} and \eqref{33}, we have
	\begin{align}\label{apctrl}
		\|z_{\epsilon}(T)\|\le\epsilon.
	\end{align}
	If $\psi_{\epsilon}^T\neq 0$, we have
	\begin{align*}
		DF_{\epsilon}(\psi_{\epsilon}^T)\cdot \psi^T=0
	\end{align*}
	for any $\psi^T\in L^2(\Omega)$, i.e.
	\begin{align}\label{min}
		\iint_{\O\times(0,T)}\psi_{\epsilon}\psi\,dxdt+\epsilon \left(\frac{\psi^T_{\epsilon}}{\|\psi^T_{\epsilon}\|},\psi^T\right)+\int_{\Omega}z^0\psi(0)\,dx-\sum_{i=1}^{2}\iint_{\O_{i,d}\times (0,T)}z_{i,d}\gamma^i\,dxdt=0.
	\end{align}
	Then we see from (\ref{formula}) and \eqref{min} that
	\begin{align}\label{35}
		\iint_{\O\times(0,T)}f_{\epsilon}\psi\,dxdt-\iint_{\O\times(0,T)}\psi_{\epsilon}\psi\,dxdt-\epsilon \left(\frac{\psi^T_{\epsilon}}{\|\psi^T_{\epsilon}\|},\psi^T\right)=\int_{\Omega}z_{\epsilon}(T)\psi^T\,dx
	\end{align}
for any $\psi^T\in L^2(\Omega).$ Substituting $f_{\epsilon}=\psi_{\epsilon}1_{\O\times(0,T)}$ in \eqref{35}, we deduce that $z_{\epsilon}(T)=-\epsilon\psi_{\epsilon}^T/\|\psi_{\epsilon}^T\|$ satisfies (\ref{apctrl}). 
	
	Furthermore, taking $\psi^T=\psi^T_{\epsilon}$ in \eqref{min}, we infer from Proposition \ref{pro} that
	\begin{align}
		\iint_{\O\times(0,T)}|f_{\epsilon}|^2\,dxdt\le C\left(\|z^0\|^2+\sum_{i=1}^{2}\iint_{\O_{i,d}\times (0,T)}\rho^2|z_{i,d}|^2\,dxdt\right),
	\end{align}
	which implies that the family of controls $\{f_{\epsilon}\}_{\epsilon>0}$ is bounded in $L^2(\O\times(0,T)).$ Thus, there exists a
	subsequence that weakly convergent to some $\hat f$ in $L^2(\O\times(0,T)).$ From (\ref{ineq}), there exists a function $\hat z\in X,$ such that
	\begin{equation*}
			z_{\epsilon_k}\to \hat{z} \,\,\text{weakly in} \  X;
			z_{\epsilon_k}(T)\rightharpoonup\hat{z}(T) \,\, \text{in}\  L^2(\Omega).
	\end{equation*}
	Then we conclude from $\|z_{\epsilon_k}(T)\|=\epsilon_k$ that
	$\hat{z}(T)= 0.$ Similarly, after passing to a subsequence if necessary, $\phi^i_{\epsilon_k}\rightharpoonup \hat{\phi}^i$ in $X$. It's easy to verify that $(\hat{z},\hat{\phi^1},\hat{\phi^2})$ is the solution corresponding to $\hat{f}$. Then the null controllability is proved.
	
	From Proposition \ref{pro} and passing to the subsequence if necessary, we can assume that
	\begin{equation*}
		\psi_{\epsilon_k}(0)\rightharpoonup \hat{\psi}_0, \quad  \text{in} \  L^2(\Omega)
	\end{equation*}
	and
	\begin{equation*}
		\begin{cases}
			\rho^{-1}\bar\gamma_{\epsilon_k}\rightharpoonup\rho^{-1}\hat{\bar\gamma},  &  \text{in}\  L^2(\O_{d}\times(0,T)),\ \text{if \eqref{con1} holds},\\
			\rho^{-1}\gamma^i_{\epsilon_k}\rightharpoonup\rho^{-1}\hat{\gamma}^i,  &  \text{in}\  L^2(\O_{i,d}\times(0,T)),\ \text{if \eqref{con2} holds}.
		\end{cases}
	\end{equation*}
	Then for any $f$ such that \eqref{exa} holds, it follows from \eqref{nu} that
	\begin{align}\label{58}
		&\iint_{\O\times(0,T)}f\hat{f}\,dxdt=\lim_{k\to \infty}\left( -\int_{\Omega}z^0\psi_{\epsilon_k}(0)\,dx+\sum_{i=1}^{2}\iint_{\O_{i,d}\times (0,T)}z_{i,d}\gamma_{\epsilon_k}^i\,dxdt\right).
	\end{align}
	Taking $f=\hat{f}$ in \eqref{58}, we can also obtain
	\begin{equation}\label{5}
		\iint_{\O\times(0,T)}|\hat f|^2\,dxdt=\lim_{k\to \infty}\left( -\int_{\Omega}z^0\psi_{\epsilon_k}(0)\,dx+\sum_{i=1}^{2}\iint_{\O_{i,d}\times (0,T)}z_{i,d}\gamma_{\epsilon_k}^i\,dxdt\right).
	\end{equation}
	Combining \eqref{58} and \eqref{5}, we see that $\hat f$ minimizes the $L^2(\O\times(0,T))$ norm in the family of the null controls for $z$. Moreover, since $J(f)$ is strictly convex, $\hat f$ is the unique solution to the extremal problem \eqref{op}\textendash\eqref{exa}.
\end{proof}

\begin{remark}
	The assumption (\ref{ass}) is natural. Indeed, we would like to get \eqref{exa} and simultaneously keep $y$ not too far
	from $y_{i,d}$ in $\O_{i,d}\times (0, T)$; consequently, it is reasonable to impose that the $y_{i,d}$ approach $\bar y$ in $\O_{i,d}$ as $t$ goes to $T$.
\end{remark}

	
\section{The semilinear case}
	The main aim of this section is to establish  the exact controllability to trajectory of problem \eqref{eq1} in the semilinear case.
	
	In the linear case, the cost functionals are convex and continuously differentiable such that \eqref{Nash1} is equivalent to \eqref{queq}. However, for the semilinear case,  the convexity of the functionals $J_i$ is not ensured. Thus, we need to introduce the definition of Nash quasi-equilibrium.
	\begin{de}
		For any given $f\in L^2(\O\times(0,T))$, a pair $(\bar v^1,\bar v^2)$ is a Nash quasi-equilibrium for the functionals $J_i$ associated with $f,$ if the condition \eqref{queq} is satisfied.
	\end{de}

	\subsection{The optimality system in the semilinear case}\label{sec}
	In this subsection, we will deduce an optimality system that describes the Nash quasi-equilibrium.
	
Let $H_i$ and $H$ be defined as in \eqref{H},  for any given $f\in L^2(\O\times (0,T)),$  if $(\bar v^1,\bar v^2)\in H$ is the Nash quasi-equilibrium associated to $f$,  then we have
	\begin{align}
		\iint_{\O_{i,d}\times (0,T)}(y(f;\bar v^1,\bar v^2)-y_{i,d})w^i\,dxdt+\alpha_{i}\iint_{\O_i
			\times (0,T)}\bar v^iv^i\,dxdt=0,\quad \forall v^i\in H_i,\label{Nash31}
	\end{align}
	where $w^i$ is the solution to the system
	\begin{align}\label{eq31}
		\begin{cases}
			w^i_t-\Delta w^i-\frac{\mu}{|x|^2}w^i=F^{\prime}(y)w^i+v^i1_{\O_i},&(x,t)\in Q,\\
			w^i=0,&(x,t)\in \Sigma,\\
			w^i(x,0)=0,&x\in \Omega.
		\end{cases}
	\end{align}
	In order to further simplified equality \eqref{Nash31}, we introduce the adjoint system of problem \eqref{eq31}:
	\begin{align}\label{ad31}
		\begin{cases}
			-\phi^i_t-\Delta \phi^i-\frac{\mu}{|x|^2}\phi^i=F^{\prime}(y)\phi^i+(y-y_{i,d})1_{\O_{i,d}},&(x,t)\in Q,\\
			\phi^i=0,&(x,t)\in \Sigma,\\
			\phi^i(x,T)=0,&x\in\Omega.
		\end{cases}
	\end{align}
By combining \eqref{eq31} and \eqref{ad31}, we can reformulate equality \eqref{Nash31} as follows
	\begin{align*}
		\iint_{\O_i\times (0,T)}(\phi^i+\alpha_i\bar v^i)v^i\,dxdt=0,\quad \forall v^i\in H_i,
	\end{align*}
	which implies that
	\begin{align}\label{125}
		\bar v^i=-\frac{1}{\alpha_i}\phi^i\big|_{\O_i\times(0,T)}.
	\end{align}
	Consequently, we obtain the following optimality system:
	\begin{align}\label{op3}
		\begin{cases}
			y_t-\Delta y-\frac{\mu}{|x|^2}y=F(y)+f1_{\O}-\sum_{i=1}^{2}\frac{1}{\alpha_i}\phi^i1_{\O_i},&(x,t)\in Q,\\
			-\phi^i_t-\Delta \phi^i-\frac{\mu}{|x|^2}\phi^i=F^{\prime}(y)\phi^i+(y-y_{i,d})1_{\O_{i,d}},&(x,t)\in Q,\\
			y=0,\phi^i=0,&(x,t)\in \Sigma,\\
			y(x,0)=y^0(x),\phi^i(x,T)=0,&x\in \Omega.
		\end{cases}
	\end{align}
	
	In what follows, we will prove the existence and uniqueness of solutions to problem \eqref{op3} under some suitable assumptions.
	\begin{pro}\label{th}
		Assume that $f\in L^2(\O\times(0,T))$ and $F\in W^{1,\infty}(\R)\cap C^1(\R)$. If the constants $\alpha_i$ are sufficiently large, then for any $y^0\in L^2(\Omega),$ problem $\eqref{op3}$ admits a weak solution $(y,\phi^1,\phi^2)\in X\times X\times X$, where $X$ is defined in \eqref{X}. Moreover, if $F\in W^{2,\infty}(\R)$,  then weak solution of problem \eqref{op3} is also unique.
	\end{pro}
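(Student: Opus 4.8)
The plan is to recast the optimality system \eqref{op3} as a fixed point problem on $L^2(Q)$ and to apply the Leray--Schauder fixed point theorem. Given $w\in L^2(Q)$, I would first solve the two backward problems
\begin{align*}
	\begin{cases}
		-\phi^i_t-\Delta \phi^i-\frac{\mu}{|x|^2}\phi^i=F'(w)\phi^i+(w-y_{i,d})1_{\O_{i,d}},&(x,t)\in Q,\\
		\phi^i=0,&(x,t)\in \Sigma,\\
		\phi^i(x,T)=0,&x\in\Omega,
	\end{cases}
\end{align*}
which, since $F'(w)\in L^\infty(Q)$ and $(w-y_{i,d})1_{\O_{i,d}}\in L^2(Q)$, admit unique solutions $\phi^i\in X$ by Lemma \ref{well}; then I would solve the forward problem
\begin{align*}
	\begin{cases}
		y_t-\Delta y-\frac{\mu}{|x|^2}y=F(w)+f1_{\O}-\sum_{i=1}^{2}\frac{1}{\alpha_i}\phi^i1_{\O_i},&(x,t)\in Q,\\
		y=0,&(x,t)\in \Sigma,\\
		y(x,0)=y^0(x),&x\in \Omega,
	\end{cases}
\end{align*}
whose right-hand side belongs to $L^2(Q)$ because $|F(w)|\le|F(0)|+\|F'\|_{L^\infty(\R)}|w|$, obtaining a unique $y\in X$. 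Setting $S(w):=y$ defines a map $S:L^2(Q)\to L^2(Q)$, and a triple $(y,\phi^1,\phi^2)\in X\times X\times X$ solves \eqref{op3} if and only if $y$ is a fixed point of $S$.

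Next I would verify the hypotheses of Leray--Schauder. Continuity of $S$ follows from the continuous dependence of linear parabolic equations on their data, together with the dominated convergence theorem applied to $F(w_n)\to F(w)$ and $F'(w_n)\phi\to F'(w)\phi$ along a.e.-convergent subsequences. Since $S$ takes values in $X$ with norm bounded in terms of $\|w\|_{L^2(Q)}$, and $X$ embeds compactly into $L^2(Q)$ (by the compact embeddings recalled in Lemma \ref{well} and the Aubin--Lions lemma), $S$ is compact. The crucial step is the a priori bound on the set $\{y\in L^2(Q):y=\tau S(y),\ \tau\in[0,1]\}$: if $y=\tau S(y)$ then $y$ solves the forward equation with right-hand side $\tau F(y)+\tau f1_{\O}-\tau\sum_i\frac1{\alpha_i}\phi^i1_{\O_i}$ and datum $\tau y^0$, where the $\phi^i$ correspond to $w=y$. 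Energy estimates for the backward problems give $\|\phi^i\|_{L^2(Q)}\le C(\|y\|_{L^2(Q)}+1)$ with $C$ depending only on $\|F'\|_{L^\infty}$, $T$ and the data; testing the $y$-equation with $y$, using $|F(y)y|\le C(|y|^2+1)$, the Hardy inequality (and, for $\mu=\mu^*(N)$, the norm of $\mathcal M$) and Gronwall's inequality, one reaches an estimate of the form $\|y\|_{L^2(Q)}^2\le C_1+C_2(\min\{\alpha_1,\alpha_2\})^{-1}\|y\|_{L^2(Q)}^2$ with $C_1,C_2$ independent of $\tau$ and of $y$. Hence for $\alpha_1,\alpha_2$ large, $\|y\|_{L^2(Q)}$ is bounded uniformly in $\tau$, and Leray--Schauder yields a fixed point, i.e. a weak solution of \eqref{op3} in $X\times X\times X$.

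For uniqueness under $F\in W^{2,\infty}(\R)$, I would take two solutions $(y_1,\phi^1_1,\phi^2_1)$ and $(y_2,\phi^1_2,\phi^2_2)$, set $Y=y_1-y_2$, $\Phi^i=\phi^i_1-\phi^i_2$, and write $F(y_1)-F(y_2)=aY$ and $F'(y_1)\phi^i_1-F'(y_2)\phi^i_2=F'(y_1)\Phi^i+b\,Y\phi^i_2$ with $a,b\in L^\infty(Q)$, $\|a\|_{L^\infty}\le\|F'\|_{L^\infty}$, $\|b\|_{L^\infty}\le\|F''\|_{L^\infty}$, so that $(Y,\Phi^1,\Phi^2)$ solves a linear coupled system with zero data. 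Testing the $Y$-equation with $Y$ and the $\Phi^i$-equations with $\Phi^i$ and adding up, the coupling terms $\frac1{\alpha_i}\iint\Phi^i1_{\O_i}Y$ and $\frac1{\alpha_i}\iint Y1_{\O_{i,d}}\Phi^i$ are absorbed for $\alpha_i$ large, and every term except $\iint b\,Y\phi^i_2\Phi^i$ is handled by the Hardy inequality and Gronwall's inequality. The term $\iint b\,Y\phi^i_2\Phi^i$ I would estimate via H\"older's inequality and the parabolic embedding $L^2(0,T;H_0^1(\Omega))\cap L^\infty(0,T;L^2(\Omega))\hookrightarrow L^{2(N+2)/N}(Q)$, pulling out the fixed finite factor $\|\phi^i_2\|_{L^{2(N+2)/N}(Q)}$ and absorbing the remaining factors involving $Y$ and $\Phi^i$, up to a Gronwall term, into the dissipation; Gronwall then forces $Y\equiv0$ and $\Phi^i\equiv0$.

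I expect the two main obstacles to be, first, making the a priori estimate in the Leray--Schauder step precise (it is precisely what forces $\alpha_1,\alpha_2$ large), and second, the term $(F'(y_1)-F'(y_2))\phi^i_2$ in the uniqueness argument: because the Hardy potential rules out any $L^2_tH^s_x$ regularity gain for $s>1$, one cannot bound $\phi^i_2$ in $L^\infty(Q)$ and must instead rely on the parabolic Sobolev embedding above, which is exactly why $F\in W^{2,\infty}(\R)$ is imposed for uniqueness.
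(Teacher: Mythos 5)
Your existence argument is correct but organized differently from the paper's. The paper freezes only the nonlinearity: for a given $u\in L^2(Q)$ it keeps the \emph{coupled} linear system \eqref{4.6} (with $F(u)$, $F'(u)$ as coefficients but with $y$ still feeding into the $\phi^i$-equations and vice versa), solves that coupled system by the same contraction argument used for \eqref{nueq} (this is one place where ``$\alpha_i$ large'' enters), and then notes that $\|\Lambda u\|_X\le C_0(\|f\|_{L^2(\O)}+1)$ with $C_0$ independent of $u$ --- which holds because $F\in W^{1,\infty}(\R)$ makes $F$ itself bounded, not merely Lipschitz --- so that $\Lambda$ maps a ball of $L^2(Q)$ into itself and a Schauder-type fixed point applies after the same compactness/continuity verification you sketch. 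You instead decouple completely (replacing $y$ by the frozen $w$ in the $\phi^i$-equations), so each sub-problem is a single linear equation solvable directly by Lemma \ref{well}, and you invoke the a priori-bound form of Leray--Schauder, with ``$\alpha_i$ large'' entering only in closing the estimate on $\{y=\tau S(y)\}$. Both routes are legitimate; yours avoids the auxiliary contraction, the paper's avoids the $\tau$-family estimate. Your fixed-point characterization and the absorption $\|y\|_{L^2(Q)}^2\le C_1+C_2(\min\alpha_i)^{-1}\|y\|_{L^2(Q)}^2$ are sound.

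The uniqueness step is where I see a genuine gap. The paper disposes of it in one sentence (``easy by energy methods''), but your concrete proposal for the critical term $\iint b\,Y\phi^i_2\Phi^i$ does not close in general. All three factors $Y$, $\phi^i_2$, $\Phi^i$ live only in the energy class $L^2(0,T;H^1_0)\cap L^\infty(0,T;L^2)$, whose parabolic Lebesgue exponents satisfy $\tfrac{2}{q}+\tfrac{N}{r}=\tfrac{N}{2}$; summing these relations over three factors shows a trilinear product of energy-class functions is integrable on $Q$ only when $N\le 4$, and the same obstruction appears in your split $\|\phi^i_2\|_{L^{2(N+2)/N}(Q)}\|Y\Phi^i\|_{L^{2(N+2)/(N+4)}(Q)}$: the remaining exponents for $Y$ and $\Phi^i$ exceed what the embedding provides once $N\ge 5$. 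Since the paper assumes only $N\ge 3$, this is not covered. Moreover, in the critical case $\mu=\mu^*(N)$ the solution space is $L^2(0,T;\mathcal{M})$ with $\mathcal{M}\supsetneq H^1_0(\Omega)$, so even the embedding $X\hookrightarrow L^{2(N+2)/N}(Q)$ you rely on is not available; and, as the paper itself emphasizes in the introduction, no higher regularity of $\phi^i_2$ can be extracted to compensate. You have correctly identified the dangerous term, but the estimate you propose for it only works for $N\le 4$ and $\mu<\mu^*(N)$; for the general case a different device (or an added hypothesis) is needed, and the paper offers none to compare against.
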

	\begin{proof}
		For any given $u\in L^2(Q)$, we consider the following problem
		\begin{align}\label{4.6}
			\begin{cases}
				y_t-\Delta y-\frac{\mu}{|x|^2}y=F(u)+f1_{\O}-\sum_{i=1}^{2}\frac{1}{\alpha_i}\phi^i1_{\O_i},&(x,t)\in Q,\\
				-\phi^i_t-\Delta\phi^i-\frac{\mu}{|x|^2}\phi^i=F^{\prime}(u)\phi^i+(y-y_{i,d})1_{\O_{i,d}},&(x,t)\in Q,\\
				y=0,\phi^i=0,&(x,t) \in \Sigma,\\
				y(x,0)=y^0(x),\phi^i(x,T)=0,&x\in\Omega.
			\end{cases}
		\end{align}
By carrying out the similar proof of the well-posedness of problem \eqref{nueq}, we conclude that there exists a unique weak solution $(y_u,\phi_u^1,\phi_u^2)\in X\times X\times X$. Define $\Lambda:L^2(Q)\to L^2(Q)$ by $\Lambda u:=y_u,$ then the mapping $\Lambda$ is well-defined, since the solution to \eqref{4.6} is unique. By the energy methods, we obtain
		\begin{align}\label{4.7}
			\|y_u\|_X\le C_0\left(\|f\|_{L^2(\O)}+1\right),
		\end{align} 
		where $C_0$ is a positive constant independent of $u$. 
		
Let $\mathcal{K}>0$ be a positive constant with $\mathcal{K}=\inf\{\lambda>0:\|u\|_{L^2(Q)}\leq \lambda\|u\|_X,\,\,\,\,\forall\,\,\,u\in X\},$ denote by $$R:=C_0\mathcal{K}(\|f\|_{L^2(\O)}+1)$$ and write $$\mathcal{B}:=\{u\in L^2(Q):\|u\|_{L^2(Q)}\le R\},$$ then $\Lambda$ maps $\mathcal{B}$ into itself.
  
  In the sequel, we will prove the existence of the solution to problem \eqref{4.6} by Leray-Schauder's fixed point theorem. Since 
  $$\mathcal{M}\hookrightarrow\hookrightarrow L^2(\Omega) \hookrightarrow\hookrightarrow \mathcal{M}^{\prime},$$  we have $X\hookrightarrow\hookrightarrow L^2(Q)$ from the Aubin-Lions compactness lemma.  Let $B$ be a bounded subset of $L^2(Q),$ we conclude from inequality \eqref{4.7} that $\|\Lambda u\|_X\le \frac{R}{\mathcal{K}}$ for all $u\in B$, which implies that $\Lambda(B)$ is relative compact in $L^2(Q)$.

Assume that $\{u_k\}_{k=1}^{\infty}$ convergent to $u$ in $L^2(Q),$ denote by $(y_k,\phi^1_k,\phi^2_k)$ the solution to
	 problem
		\begin{align}
			\begin{cases}
				y_{k,t}-\Delta y_k-\frac{\mu}{|x|^2}y_k=F(u_k)+f1_{\O}-\sum_{i=1}^{2}\frac{1}{\alpha_i}\phi^i_k1_{\O_i},&(x,t)\in Q,\\
				-\phi^i_{k,t}-\Delta\phi^i_k-\frac{\mu}{|x|^2}\phi^i_k=F^{\prime}(u_k)\phi_k^i+(y_k-y_{i,d})1_{\O_{i,d}},&(x,t)\in Q,\\
				y_k=0,\phi_k^i=0,&(x,t) \in \Sigma,\\
				y_k(x,0)=y^0(x),\phi^i_k(x,T)=0,&x\in\Omega,
			\end{cases}
		\end{align}
		then we conclude that $\{y_k\}_{k=1}^{\infty}$ is bounded in $X$ and relative compact in $L^2(Q)$. Then there exists a subsequence (still denoted by $\{y_{k}\}_{k=1}^{\infty}$),  such that
		\begin{equation*}
			\begin{cases}
				y_{k}\to \widetilde{y}&\text{in}\  L^2(Q),\\
				y_{k}\to \widetilde{y}&\text{weakly in}\ X.
			\end{cases}
		\end{equation*}
		Likewise, passing to a subsequence if necessary, we have
		\begin{equation*}
			\begin{cases}
				\phi^i_{k}\to \widetilde{\phi^i}&\text{in}\  L^2(Q),\\
				\phi^i_{k}\to \widetilde{\phi^i}&\text{weakly in}\ X.
			\end{cases}
		\end{equation*}
		Since $F\in W^{1,\infty}(\R)\cap C^1(\R)$, we conclude that $F:L^2(Q)\to L^2(Q)$ and $F^{\prime}:L^2(Q)\to L^{\frac{n}{2}}(Q)$ are Nemytski operators, then
		\begin{align*}
			\begin{cases}
				F(u_k) \to F(u), &  \text{in }L^{2}(Q),\\
				F^{\prime}(u_k) \to F^{\prime}(u), &  \text{in }L^{\frac{n}{2}}(Q).
			\end{cases}
		\end{align*}
		Then we can verify that $\widetilde{y}$ is the weak solution of \eqref{4.6} associated with $u$, that is, $\tilde{y}=y_u$. Therefore, $y_n\to y_u$ in $L^2(Q)$, which implies that $\Lambda:\mathcal{B}\to\mathcal{B}$ is continuous. 
		
		 Observe that $\Lambda$ satisfies the assumptions of Leray-Schauder's fixed point theorem and, consequently,  it possesses at least one fixed point $\bar y$. Furthermore, if $F\in W^{2,\infty}(\R)$, it's easy to obtain the uniqueness of solution by energy methods. As a consequence, problem \eqref{op3} admits a weak solution and the solution is unique if $F\in W^{2,\infty}(\R).$
	\end{proof}
	\begin{remark}
		From the Proposition \ref{th} and arguments in this section, we obtain the existence of Nash quasi-equilibrium. Furthermore, if $F\in W^{2,\infty}(\R)$, we could also obtain the uniqueness of Nash quasi-equilibrium.
	\end{remark}
	
	\subsection{Equilibrium and quasi-equilibrium}
	The main objective of this subsection is to prove that the concepts of Nash equilibrium and Nash quasi-equilibrium are equivalent if Nash quasi-equilibrium is unique.
	\begin{pro}
Assume that $F\in W^{2,\infty}(\R)$ and $\alpha_i$ are large enough, then for any given $f\in L^2(\O\times(0,T))$ and $y^0\in L^2(\Omega)$,  the couple $(\bar v^1,\bar v^2)$ is a Nash equilibrium if and only if it satisfies \eqref{queq}.
	\end{pro}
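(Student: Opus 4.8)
The plan is to prove the two implications in the equivalence separately. One of them is essentially free: if $(\bar v^1,\bar v^2)$ is a Nash equilibrium, then \eqref{Nash1} says in particular that $\bar v^1$ is a global minimizer over $H_1$ of the map $v^1\mapsto J_1(f;v^1,\bar v^2)$, and since $F\in C^1(\R)\cap W^{1,\infty}(\R)$ the solution operator $v^1\mapsto y(f;v^1,\bar v^2)$ is of class $C^1$ from $H_1$ into $X$ (its derivative in a direction $v^1$ being the solution $w^1$ of \eqref{eq31}); hence $J_1(f;\cdot,\bar v^2)$ is differentiable and its derivative must vanish at a minimum, which is exactly \eqref{queq} for $i=1$, and the same argument handles $i=2$. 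So the real content is the converse, and the guiding idea is to use \emph{only} the uniqueness of the optimality system \eqref{op3}, with no convexity argument on the individual cost functionals — this is what keeps the hypotheses lighter than in the literature.

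For the converse, I would start from the hypothesis that $(\bar v^1,\bar v^2)$ satisfies \eqref{queq} and recall, from the reduction carried out in Subsection~\ref{sec}, that this is equivalent to \eqref{125}, i.e. $\bar v^i=-\frac{1}{\alpha_i}\phi^i|_{\O_i\times(0,T)}$, where, together with $\bar y:=y(f;\bar v^1,\bar v^2)$, the triple $(\bar y,\phi^1,\phi^2)$ is a weak solution of \eqref{op3}; since $F\in W^{2,\infty}(\R)$ and the $\alpha_i$ are large, Proposition~\ref{th} guarantees that this solution is unique. Next I would fix $i=1$, freeze the second follower at $\bar v^2$, and study $h(v^1):=J_1(f;v^1,\bar v^2)$ on $H_1$ by the direct method: $h$ is coercive because of the term $\frac{\alpha_1}{2}\|v^1\|_{H_1}^2$, and it is weakly lower semicontinuous, because if $v^1_k\rightharpoonup v^1$ in $H_1$ then the energy estimate (using $F\in W^{1,\infty}(\R)$) bounds $y_k:=y(f;v^1_k,\bar v^2)$ in $X$, so that by the compact embedding $X\hookrightarrow\hookrightarrow L^2(Q)$ (Lemma~\ref{well} together with the Aubin--Lions lemma) and uniqueness for the state equation one has $y_k\to y(f;v^1,\bar v^2)$ strongly in $L^2(Q)$, whence $h(v^1)\le\liminf_k h(v^1_k)$. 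Therefore $h$ attains its infimum at some $\widetilde v^1\in H_1$, which is a critical point of the $C^1$ functional $h$.

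The core of the argument is to show that this critical point is $\bar v^1$. Applying the reduction of Subsection~\ref{sec} to $h$, with the \emph{fixed} second follower $\bar v^2$, I get $\widetilde v^1=-\frac{1}{\alpha_1}\widetilde\phi^1|_{\O_1\times(0,T)}$, where $(\widetilde y,\widetilde\phi^1)$, with $\widetilde y:=y(f;\widetilde v^1,\bar v^2)$, solves the reduced two-equation optimality system
\[
\begin{cases}
\widetilde y_t-\Delta \widetilde y-\frac{\mu}{|x|^2}\widetilde y=F(\widetilde y)+f1_{\O}-\frac{1}{\alpha_1}\widetilde\phi^1 1_{\O_1}+\bar v^2 1_{\O_2},&(x,t)\in Q,\\
-\widetilde\phi^1_t-\Delta \widetilde\phi^1-\frac{\mu}{|x|^2}\widetilde\phi^1=F'(\widetilde y)\widetilde\phi^1+(\widetilde y-y_{1,d})1_{\O_{1,d}},&(x,t)\in Q,\\
\widetilde y=\widetilde\phi^1=0,&(x,t)\in\Sigma,\\
\widetilde y(\cdot,0)=y^0,\quad \widetilde\phi^1(\cdot,T)=0,&x\in\Omega.
\end{cases}
\]
The key observation is that, because $\bar v^2=-\frac{1}{\alpha_2}\phi^2|_{\O_2\times(0,T)}$, the pair $(\bar y,\phi^1)$ extracted from the solution of the full system \eqref{op3} \emph{also} solves this reduced system. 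Arguing exactly as in the uniqueness part of Proposition~\ref{th} — subtracting the equations satisfied by two solutions of the reduced system and absorbing the coupling terms by Gronwall's inequality once $\alpha_1$ is large enough, with $F\in W^{2,\infty}(\R)$ used to control the difference $F'(\widetilde y)\widetilde\phi^1-F'(\bar y)\phi^1$ — the reduced system has a unique solution, so $(\widetilde y,\widetilde\phi^1)=(\bar y,\phi^1)$ and hence $\widetilde v^1=-\frac{1}{\alpha_1}\phi^1|_{\O_1\times(0,T)}=\bar v^1$. Thus $\bar v^1$ is the global minimizer of $h$, i.e. $J_1(f;\bar v^1,\bar v^2)\le J_1(f;v^1,\bar v^2)$ for every $v^1\in H_1$; exchanging the roles of the two followers yields the corresponding inequality for $i=2$, so $(\bar v^1,\bar v^2)$ is a Nash equilibrium.

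The step I expect to be the main obstacle is the uniqueness of the reduced system above: this is exactly where the hypotheses $F\in W^{2,\infty}(\R)$ and $\alpha_i$ large are really used, and it has to be done carefully since the adjoint state enjoys only the $X$-regularity from Lemma~\ref{well}, so the nonlinear coupling term $\bigl(F'(\widetilde y)-F'(\bar y)\bigr)\phi^1$ must be controlled by Sobolev and interpolation inequalities on $\widetilde\Omega$ (where the Hardy term is harmless) together with the smallness coming from large $\alpha_1$. Everything else — the first implication and the direct-method step for the scalar functional $h$ — is routine once the well-posedness of Section~\ref{preliminary} and Proposition~\ref{th} are available.
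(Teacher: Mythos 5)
Your proposal is correct and follows essentially the same route as the paper's proof: the easy implication via differentiability, then for the converse the direct method (coercivity plus weak lower semicontinuity through the compact embedding $X\hookrightarrow\hookrightarrow L^2(Q)$) to produce a minimizer $\tilde v^1$ of $v^1\mapsto J_1(f;v^1,\bar v^2)$, identification of $\tilde v^1$ through the reduced two-equation optimality system \eqref{123}, and finally the uniqueness of that reduced system (proved as in Proposition \ref{th}) to conclude $\tilde v^1=\bar v^1$. No substantive differences to report.
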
 
	\begin{proof}
		It's obvious that Nash equilibrium satisfies \eqref{queq}, then we just need to prove the Nash quasi-equilibrium $(\bar v^1,\bar v^2)$ satisfies \eqref{Nash1}.
		
First of all, we will verify that the functional $\bar J_1:H_1\to \R$ given by $$\bar J_1(v^1)=\frac{1}{2}\iint_{\O_{1,d}\times(0,T)}|y(f;v^1,\bar{v}^2)-y_{1,d}|^2\,dxdt+\frac{\alpha_1}{2}\iint_{\O_1\times(0,T)}|v^1|^2\,dxdt$$ is weakly lower semi-continuous. For any $v^1_k\rightharpoonup v^1$ in $H_1$, we denote by $y_k$ the solution to system 
		\begin{align}\label{y}
			\begin{cases}
				y_{k,t}-\Delta y_k-\frac{\mu}{|x|^2}y_{k}=F(y_k)+f1_{\O}+v_k^11_{\O_1}+\bar{v}^21_{\O_2},&(x,t)\in Q,\\
				y_k=0,&(x,t) \in\Sigma,\\
				y_k(x,0)=y^0(x), &x\in\Omega,
			\end{cases}
		\end{align}
		it follows from the energy method that
		\begin{align*}
			\|y_k\|_X\le C(\|f\|_{L^2(\O\times(0,T))}+\|\bar{v}^2\|_{H_2}+\|v^1_k\|_{H_1}+1).
		\end{align*}
		Since $\{v^1_k\}_{k=1}^{\infty}$ is bounded in $H_1$, we can deduce that there exists a subsequence $\{y_k\}_{k=1}^{\infty}$ (still denote by themselves) such that 
		\begin{equation*}
			\begin{cases}
				y_{k}\to y&\text{in}\  L^2(Q),\\
				y_{k}\to y&\text{weakly in}\ X.
			\end{cases}
		\end{equation*}
Since $F\in W^{2,\infty}(\R)$ implies that $F:L^2(Q)\to L^2(Q)$ is a Nemytski operator, then
		\begin{align*}
			F(y_k) \to F(y)\quad  \text{in }L^{2}(Q).
		\end{align*}
		{}
	    Hence $y$ is the solution to problem
	    \begin{align*}
	    	\begin{cases}
	    		y_{t}-\Delta y-\frac{\mu}{|x|^2}y=F(y)+f1_{\O}+v^11_{\O_1}+\bar{v}^21_{\O_2},&(x,t)\in Q,\\
	    		y=0,&(x,t) \in\Sigma,\\
	    		y(x,0)=y^0(x), &x\in\Omega.
	    	\end{cases}
	    \end{align*}
	    Then we obtain
		\begin{align*}
			\varliminf_{k\to \infty} \bar J_1(v^1_k)&=\frac{1}{2}\varliminf_{k\to \infty}\iint_{\O_{1,d}\times(0,T)}|y_k-y_{1,d}|^2\,dxdt+\frac{\alpha_1}{2}\varliminf_{k\to \infty}\iint_{\O_1\times(0,T)}|v_k^1|^2\,dxdt\\
			&\ge \frac{1}{2}\iint_{\O_{1,d}\times(0,T)}|y-y_{1,d}|^2\,dxdt+\frac{\alpha_1}{2}\iint_{\O_1\times(0,T)}|v^1|^2\,dxdt\\
			&=\bar J_1(v^1).
		\end{align*}
		Observe that $\bar J_1$ is also coercive on $H_1$. Then $\bar J_1:H_1\to \R$ possesses a minimizer $\tilde{v}^1$. Since $\bar J_1$ is differentiable, we have 
		\begin{align*}
			D\bar J_1(\tilde{v}^1)\cdot v^1=0,\quad \forall v^1\in H_1.
		\end{align*}
		{}
		Arguing as in Section \ref{sec}, we can deduce that
		\begin{align}
			\tilde v^1=-\frac{1}{\alpha_1}\phi^1\big|_{\O_1\times(0,T)},
		\end{align}
		where $\phi^1$ is the solution to
		\begin{align}
			\begin{cases}
				-\phi^1_t-\Delta \phi^1-\frac{\mu}{|x|^2}\phi^1=F^{\prime}(y(f;\tilde{v}^1,\bar v^2))\phi^1+(y(f;\tilde{v}^1,\bar v^2)-y_{1,d})1_{\O_{1,d}},&(x,t)\in Q,\\
				\phi^1=0,&(x,t)\in \Sigma,\\
				\phi^1(x,T)=0,&x\in\Omega.
			\end{cases}
		\end{align}
		Consequently, $(y(f;\tilde v^1,\bar v^2),\phi^1)$ is the solution to the following system:
		\begin{align}\label{123}
			\begin{cases}
				y_t-\Delta y-\frac{\mu}{|x|^2}y=F(y)+f1_{\O}-\frac{1}{\alpha_1}\phi^11_{\O_1}+\bar v^21_{\O_2},&(x,t)\in Q,\\
				-\phi^1_t-\Delta \phi^1+\frac{\mu}{|x|^2}\phi^1=F^{\prime}(y)\phi^1+(y-y_{1,d})1_{\O_{1,d}},&(x,t)\in Q,\\
				y=0,\phi^1=0,&(x,t)\in \Sigma,\\
				y(x,0)=y^0(x),\phi^1(x,T)=0,&x\in \Omega.
			\end{cases}
		\end{align}
By carrying out the similar proof of Proposition \ref{th}, we can obtain the existence and uniqueness of the solution to problem
		\eqref{123}. Notice that $$\bar v^1=-\frac{1}{\alpha_1}\phi^1\big|_{\O_1\times(0,T)},$$ it infers from the uniqueness of the solution to \eqref{123} that $\bar v^1=\tilde{v}^1$.

		Similarly, taking 
		\[\bar J_2(v^2)=\frac{1}{2}\iint_{\O_{2,d}\times(0,T)}|y(f;\bar v^1,v^2)-y_{2,d}|^2\,dxdt+\frac{\alpha_2}{2}\iint_{\O_2\times(0,T)}|v^2|^2\,dxdt,\]
		then $\bar J_2:H_2\to \R$ possesses a minimizer $\widetilde {v}^2=\bar{v}^2$. Hence the pair $(\bar v^1,\bar v^2)$ fulfills \eqref{Nash1}, that is, $(\bar v^1,\bar v^2)$ is the Nash equilibrium.
	\end{proof}

	\subsection{Exact controllability to trajectory}
	We will prove the exact controllability to trajectory of problem \eqref{eq1} in this subsection.
	\begin{thm}\label{th2}
		Suppose that $\O_{i,d}$ and $\alpha_i$ are the same as in Theorem \ref{th1}, $F\in C^1(\R)\cap W^{1,\infty}(\R)$ and let $\bar y$ be the unique solution of problem \eqref{tr} with initial data $\bar y_0\in L^2(\Omega)$. If \eqref{ass} holds, then for any $y^0\in L^2(\Omega)$, there exists a control $f\in L^2(\O\times(0,T))$ and an associated Nash quasi-equilibrium $(\bar v^1,\bar v^2)$ such that the solution to \eqref{eq1} satisfies \eqref{exa}. Moreover, if $F\in W^{2,\infty}(\R)$, $(\bar v^1,\bar v^2)$ is also the Nash equilibrium.
	\end{thm}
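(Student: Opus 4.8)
The plan is to run the linearization plus Leray--Schauder scheme, applying the argument of Theorem \ref{th1} to a family of linear problems obtained by freezing the nonlinearity. Writing $z=y-\bar y$ and $z_{i,d}=y_{i,d}-\bar y$ as in Section 3, the goal is to produce $(z,\phi^1,\phi^2,f)$ solving the $z$-form of the nonlinear optimality system \eqref{op3} together with the null constraint $z(\cdot,T)=0$; then $y=z+\bar y$ solves \eqref{eq1} with control $f$, the pair $\bar v^i:=-\frac{1}{\alpha_i}\phi^i|_{\O_i\times(0,T)}$ is the associated Nash quasi-equilibrium (it satisfies \eqref{queq} by the computation of Subsection \ref{sec}), and \eqref{exa} holds. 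For $\xi\in L^2(Q)$ I would define the bounded potentials $a_\xi:=\int_0^1 F'(\bar y+\tau\xi)\,d\tau$ and $b_\xi:=F'(\bar y+\xi)$, which satisfy $\|a_\xi\|_{L^\infty(Q)},\|b_\xi\|_{L^\infty(Q)}\le\|F'\|_{L^\infty(\R)}=:M$, and let $(z_\xi,\phi^1_\xi,\phi^2_\xi,f_\xi)$ be the minimal-control-norm solution of the linear optimality system obtained from \eqref{op3} by replacing $F(z+\bar y)-F(\bar y)$ with $a_\xi z$ and $F'(\bar y+z)$ with $b_\xi$, subject to $z(\cdot,T)=0$. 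Set $\Lambda(\xi):=z_\xi$. A fixed point $z^{\ast}=\Lambda(z^{\ast})$ satisfies $a_{z^{\ast}}z^{\ast}=F(z^{\ast}+\bar y)-F(\bar y)$ by the fundamental theorem of calculus and $b_{z^{\ast}}=F'(\bar y+z^{\ast})$, so $(z^{\ast},\phi^1_{z^{\ast}},\phi^2_{z^{\ast}},f_{z^{\ast}})$ solves \eqref{op3} in $z$-form with $z^{\ast}(\cdot,T)=0$, which is exactly what is required.

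The heart of the matter is the linear controllability result with the bounded zeroth-order terms $a_\xi,b_\xi$, together with a bound $\|z_\xi\|_X+\sum_i\|\phi^i_\xi\|_X+\|f_\xi\|_{L^2(\O\times(0,T))}\le C$ in which $C$ depends on $M$, $\|z^0\|$ and the quantity in \eqref{ass} but \emph{not} on $\xi$. To obtain it I would revisit Lemma \ref{carle} and Proposition \ref{pro}: the adjoint of the frozen optimality system is \eqref{adeq} with $a_\xi\psi$ added to the $\psi$-equation and $b_\xi\gamma^i$ added to the $\gamma^i$-equations. The key observation is that the left-hand side of \eqref{carleman} in fact controls a full weighted $L^2$-norm of $u$ over all of $\Omega$: on $B(0,1)$ one has $\frac{1}{|x|}\ge1$, so $s\iint_Q\theta e^{-2\sigma}\frac{u^2}{|x|}$ dominates $s\iint_{B(0,1)\times(0,T)}\theta e^{-2\sigma}u^2$, while on $\widetilde\Omega$ one has $\Phi\ge1$ and $\theta\ge\theta_{\min}>0$, so $s^3\lambda^4\iint_{\widetilde\Omega\times(0,T)}\theta^3\Phi^3e^{-2\sigma}u^2$ dominates a constant times $s\iint_{\widetilde\Omega\times(0,T)}\theta e^{-2\sigma}u^2$. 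Consequently the extra contributions $\iint_Q e^{-2\sigma}|a_\xi\psi|^2$ and $\iint_Q e^{-2\sigma}|b_\xi\gamma^i|^2$ produced by moving the new potentials to the right-hand side are bounded by $\frac{CM^2}{s}$ times the left-hand side of the Carleman inequality, hence absorbed by choosing $s_0=s_0(\lambda,M)$ large. Re-running the proof of Proposition \ref{pro} with this absorption gives \eqref{ob1}/\eqref{ob2} for the perturbed adjoint system with a constant depending only on $M$ and the fixed data, and the duality argument of Theorem \ref{th1} (the functionals $F_\epsilon$, the passage $\epsilon\to0$, the hypothesis \eqref{ass} to control $\sum_i\iint\rho^2|z_{i,d}|^2$) then yields the linear controllability with the advertised $\xi$-uniform estimate. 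Well-posedness and uniqueness of the frozen optimality system for $\alpha_i$ large (now depending on $M$) follows from the contraction argument of Subsection \ref{sec}, the extra term $b_\xi\phi^i$ only inflating the contraction constant by a factor depending on $M$.

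With the linear theory in hand, the Leray--Schauder step is routine. By the $\xi$-uniform bound and the compact embedding $X\hookrightarrow\hookrightarrow L^2(Q)$ (Aubin--Lions, using $\mathcal M\hookrightarrow\hookrightarrow L^2(\Omega)\hookrightarrow\hookrightarrow\mathcal M'$ from Lemma \ref{well}), $\Lambda$ sends $L^2(Q)$ into a fixed ball and has relatively compact range, so it maps a closed ball $\mathcal B\subset L^2(Q)$ into itself. For continuity, if $\xi_k\to\xi$ in $L^2(Q)$ then along a subsequence $\xi_k\to\xi$ a.e., whence $a_{\xi_k}\to a_\xi$ and $b_{\xi_k}\to b_\xi$ a.e.; being uniformly bounded they converge in every $L^p(Q)$ with $p<\infty$. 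The uniform estimates then allow one to extract weak-$X$, strong-$L^2(Q)$ limits of $z_{\xi_k}$ and $\phi^i_{\xi_k}$ and a weak $L^2$ limit of $f_{\xi_k}$, to pass to the limit in the frozen optimality system, in $z_{\xi_k}(\cdot,T)=0$ and in the optimality relation for $f_{\xi_k}$, and to identify the limit as $(z_\xi,\phi^i_\xi,f_\xi)$ by uniqueness; since every subsequence has a further subsequence converging to this same limit, $\Lambda(\xi_k)\to\Lambda(\xi)$ in $L^2(Q)$. Leray--Schauder's theorem (exactly as invoked in Proposition \ref{th}) then furnishes a fixed point, and the proof concludes as in the first paragraph; when moreover $F\in W^{2,\infty}(\R)$, the quasi-equilibrium so obtained is a genuine Nash equilibrium by the equivalence between Nash equilibrium and Nash quasi-equilibrium established above. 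The one genuinely delicate point is the one flagged in the previous paragraph: verifying that the Hardy--Carleman inequality \eqref{carleman} dominates a weighted $L^2$-norm of the solution over the whole domain, including near the origin where the weight $\Phi$ degenerates, so that the frozen potentials can be absorbed with constants independent of $\xi$.
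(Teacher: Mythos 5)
Your proposal is correct and follows essentially the same route as the paper: freeze the nonlinearity through $G(x,t;z)=\int_0^1F'(\bar y+\tau z)\,d\tau$ and $F'(\bar y+z)$, establish a $\xi$-uniform observability inequality for the frozen adjoint system via Lemma \ref{carle} and the argument of Proposition \ref{pro}, obtain uniformly bounded minimal-norm controls as in Theorem \ref{th1}, and conclude by Leray--Schauder compactness as in Proposition \ref{th}. Your explicit verification that the bounded frozen potentials can be absorbed into the left-hand side of the Carleman inequality (using $|x|^{-1}\ge 1$ on $B(0,1)$ and $\Phi\ge 1$, $\theta\ge\theta_{\min}$ on $\widetilde\Omega$) is a detail the paper leaves implicit, but it does not change the method.
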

	\begin{proof}
		Let us perform the change of variables $z=y-\bar y$ , we see that $(z,\phi^1,\phi^2)$ is the solution to problem
		\begin{align}
			\begin{cases}
				z_t-\Delta z-\frac{\mu}{|x|^2}z=G(x,t;z)z+f1_{\O}-\sum_{i=1}^{2}\frac{1}{\alpha_i}\phi^i1_{\O_i},&(x,t)\in Q,\\
				-\phi^i_t-\Delta\phi^i-\frac{\mu}{|x|^2}\phi^i=F^{\prime}(\bar y+z)\phi^i+(z-z_{i,d})1_{\O_{i,d}},&(x,t)\in Q,\\
				z=0,\phi^i=0,&(x,t) \in \Sigma,\\
				z(x,0)=z^0(x),\phi^i(x,T)=0,&x\in\Omega,
			\end{cases}\label{op31}
		\end{align}
		where $z=y-\bar y^0,\ z_{i,d}=y_{i,d}-\bar y$ and 
		\begin{align*}
			G(x,t;z)=\int_{0}^{1}F^{\prime}(\bar y+\tau z)\,d\tau.
		\end{align*}
		For each $z\in L^2(Q)$ and $f\in L^2(\O)$, we consider the linear system
		\begin{align}
			\begin{cases}
				w_t-\Delta w-\frac{\mu}{|x|^2}w=G(x,t;z)w+f1_{\O}-\sum_{i=1}^{2}\frac{1}{\alpha_i}\phi^i1_{\O_i},&(x,t)\in Q,\\
				-\phi^i_t-\Delta\phi^i-\frac{\mu}{|x|^2}\phi^i=F^{\prime}(\bar y+z)\phi^i+(w-z_{i,d})1_{\O_{i,d}},&(x,t)\in Q,\\
				w=0,\phi^i=0,&(x,t) \in \Sigma,\\
				w(x,0)=z^0(x),\phi^i(x,T)=0,&x\in\Omega.
			\end{cases}\label{eq33}
		\end{align}
Denote by $(w_z,\phi^1_z,\phi^2_z)$ the solution to system (\ref{eq33}),
		then we have
		\begin{align}\label{w}
			\|w_z\|_X\le C\left(\|f\|_{L^2(\O\times (0,T))}+1\right),
		\end{align}
		where $C$ is a positive constant independent of $z$. Let $(\psi_z,\gamma^1_z,\gamma^2_z)$ is the solution to problem
		\begin{align}\label{44}
			\begin{cases}
				-\psi_{z,t}-\Delta \psi_z-\frac{\mu}{|x|^2}\psi_z=G(x,t;z)\psi_z+\sum_{i=1}^{2}\gamma^i_z1_{\O_{i,d}},&(x,t)\in Q,\\
				\gamma^i_{z,t}-\Delta \gamma^i_z-\frac{\mu}{|x|^2}\gamma_z^i=F^{\prime}(\bar y+z)\gamma_z^i-\frac{1}{\alpha_i}\psi_z1_{\O_i},&(x,t) \in Q,\\
				\psi_z=0,\gamma_z^i=0,&(x,t)\in\Sigma,\\
				\psi_z(x,T)=\psi^T(x),\gamma^i_z(x,0)=0,&x\in \Omega.
			\end{cases}	
		\end{align}
		Combining problem \eqref{eq33} and \eqref{44}, we obtain that for any $\psi^T\in L^2(\Omega)$,
		\begin{align*}
			\iint_{\O\times (0,T)}f\psi_z\,dxdt-\sum_{i=1}^{2}\int_{\O_{i,d}\times(0,T)}\gamma^i_zz_{i,d}\,dxdt=-\int_{\Omega}z^0\psi_z(0)\,dx+\int_{\Omega}w_z(T)\psi^T\,dx,
		\end{align*}
		which entails that we have the null controllability of the problem \eqref{eq33} if and only if
		\begin{align}
			\iint_{\O\times (0,T)}f\psi_z\,dxdt-\sum_{i=1}^{2}\int_{\O_{i,d}\times(0,T)}\gamma^i_zz_{i,d}\,dxdt=-\int_{\Omega}z^0\psi_z(0)\,dx,\quad \forall \psi^T\in L^2(\Omega).
		\end{align}
		As in the previous section, we can define the functional
		\begin{align*}
			F_{\epsilon,z}(\psi^T)=\frac{1}{2}\iint_{\O\times(0,T)}|\psi_z|^2\,dxdt+\epsilon\|\psi^T\|+\int_{\Omega}z^0\psi_z(0)\,dx-\sum_{i=1}^{2}\int_{\O_{i,d}\times(0,T)}\gamma^i_zz_{i,d}\,dxdt.
		\end{align*}
		By Lemma \ref{carle} and carrying out the similar proof of Proposition \ref{pro}, we obtain the observability inequalities:
		\begin{align*}
				\|\psi_z(0)\|^2+\sum_{i=1}^{2}\iint_{\O_{i,d}\times(0,T)}\rho^{-2}|\gamma_z^i|^2\,dxdt\le C\iint_{\O\times(0,T)}|\psi_z|^2\,dxdt,  &  \quad \text{if \eqref{con1} holds,} \\
				\|\psi_z(0)\|^2+\iint_{\O_{i,d}\times(0,T)}\rho^{-2}|\sum_{i=1}^{2}\gamma_z^i|^2\,dxdt\le C\iint_{\O\times(0,T)}|\psi_z|^2\,dxdt,  &  \quad \text{if \eqref{con2} holds,}
		\end{align*}
		where $C$ is a positive constant independent of $z$. Arguing as in the proof of Theorem \ref{th1}, we get a leader $f_z\in L^2(\O\times(0,T))$ such that the associated solution to \eqref{eq33} satisfies
		\begin{align*}
			w_z(T)=0,\quad \text{for \textit{a.e.}}\  x\in \Omega.
		\end{align*}
		Moreover, there exists a positive constant $C$ independent of $z$, such that
		\begin{align}
			\|f_z\|_{L^2(\O\times (0,T))}\le C,\quad \forall z\in L^2(\O\times (0,T)).
		\end{align}
		Applying the Leray-Schauder's fixed point theorem, we can deduce that for any $z^0\in L^2(\Omega)$, there exists at least one control $f\in L^2(\O\times(0,T))$ such that the corresponding solutions to problem \eqref{op31} satisfies 
		\begin{align*}
			z(T)=0,\quad \text{for \textit{a.e.}}\  x\in \Omega.
		\end{align*}
		The details of proof are very similar with the proof of Theorem \ref{th}, we omit it here.
	\end{proof}
	\begin{remark}
		In fact, we can argue as in Theorem \ref{th1} that the control $f$ in Theorem \ref{th2} is the solution to the extremal problem \eqref{op}\textendash\eqref{exa}. However, the uniqueness is not obtained since we can not guarantee the convexity of $J(f)$.
	\end{remark}
\section*{Acknowledgement}
This work was supported by the National Science Foundation of China Grant (11871389), the Fundamental Research Funds for the Central Universities (xzy012022008) and Shaanxi Fundamental Science Research Project for Mathematics and Physics (22JSY032).

	\bibliographystyle{abbrv}
	\bibliography{BIB.bib}

\begin{thebibliography}{10}

\bibitem{araruna2018hierarchic}
F.~D. Araruna, E.~Fern{\'a}ndez-Cara, and L.~C. da~Silva.
\newblock Hierarchic control for the wave equation.
\newblock {\em J. Optim. Theory Appl.}, 178(1):264--288, 2018.

\bibitem{araruna2020hierarchical}
F.~D. Araruna, E.~Fern{\'a}ndez-Cara, and L.~C. da~Silva.
\newblock Hierarchical exact controllability of semilinear parabolic equations
  with distributed and boundary controls.
\newblock {\em Commun. Contemp. Math}, 22(7):1950034, 41pp, 2020.

\bibitem{araruna2017new}
F.~D. Araruna, E.~Fern{\'a}ndez-Cara, S.~Guerrero, and M.~C. Santos.
\newblock New results on the {Stackelberg-Nash} exact control of linear
  parabolic equations.
\newblock {\em Systems Control Lett.}, 104:78--85, 2017.

\bibitem{araruna2015stackelberg}
F.~D. Araruna, E.~Fern{\'a}ndez-Cara, and M.~C. Santos.
\newblock {Stackelberg-Nash} exact controllability for linear and semilinear
  parabolic equations.
\newblock {\em ESAIM Control Optim. Calc. Var.}, 21(3):835--856, 2015.

\bibitem{baras1984heat}
P.~Baras and J.~A. Goldstein.
\newblock The heat equation with a singular potential.
\newblock {\em Trans. Amer. Math. Soc.}, 284(1):121--139, 1984.

\bibitem{baras1984remarks}
P.~Baras and J.~A. Goldstein.
\newblock Remarks on the inverse square potential in quantum mechanics.
\newblock In {\em Differential equations ({B}irmingham, 1983)}, pages 31--35.
  North-Holland, Amsterdam, 1984.

\bibitem{bebernes2013mathematical}
J.~Bebernes and D.~Eberly.
\newblock {\em Mathematical Problems from Combustion Theory}.
\newblock Springer, New York, 2013.

\bibitem{brezis1997blow}
H.~Brezis and J.~L. V{\'a}zquez.
\newblock Blow-up solutions of some nonlinear elliptic problems.
\newblock {\em Rev. Mat. Univ. Complut. Madrid}, 10(2):443--469, 1997.

\bibitem{de2004bound}
A.~S. de~Castro.
\newblock Bound states of the dirac equation for a class of effective quadratic
  plus inversely quadratic potentials.
\newblock {\em Ann. Physics}, 311(1):170--181, 2004.

\bibitem{DEJESUS2016377}
I.~P. {de Jesus}.
\newblock Hierarchic control for the one-dimensional wave equation in domains
  with moving boundary.
\newblock {\em Nonlinear Analysis: Real World Applications}, 32:377--388, 2016.

\bibitem{DETERESA2025103428}
L.~{de Teresa} and J.~A. Villa.
\newblock A new hierarchical control for the wave equation.
\newblock {\em Wave Motion}, 134:103428, 2025.

\bibitem{diaz2002neumann}
J.~I. D{\'\i}az.
\newblock On the von neumann problem and the approximate controllability of
  {Stackelberg-Nash} strategies for some environmental problems.
\newblock {\em RACSAM. Rev. R. Acad. Cienc. Exactas F\'is. Nat. Ser. A Mat.},
  96:343--356, 2002.

\bibitem{diaz2004approximate}
J.~I. D\'{i}az and J.~L. Lions.
\newblock On the approximate controllability of {Stackelberg-Nash} strategies.
\newblock In {\em Ocean circulation and pollution control---a mathematical and
  numerical investigation (Madrid, 1997)}, pages 17--27. Springer, Berlin,
  2004.

\bibitem{dold1998rate}
J.~W. Dold, V.~A. Galaktionov, A.~A. Lacey, and J.~L. V{\'a}zquez.
\newblock Rate of approach to a singular steady state in quasilinear
  reaction-diffusion equations.
\newblock {\em Ann. Scuola Norm. Sup. Pisa Cl. Sci.}, 26(4):663--687, 1998.

\bibitem{ervedoza2008control}
S.~Ervedoza.
\newblock Control and stabilization properties for a singular heat equation
  with an inverse-square potential.
\newblock {\em Comm. Partial Differential Equations}, 33(10-12):1996--2019,
  2008.

\bibitem{evans2022partial}
L.~C. Evans.
\newblock {\em Partial Differential Equations}.
\newblock American Mathematical Society, Berkeley, 2010.

\bibitem{li2025hierarchical}
L.~Fang and Y.~Bo.
\newblock Hierarchical exact controllability of the fourth order parabolic
  equations.
\newblock {\em Commun. Contemp. Math.}, page 37pp, 2025.
\newblock \DOI{10.1142/S0219199725500245}.

\bibitem{fursikov1996controllability}
A.~V. Fursikov and O.~Y. Imanuvilov.
\newblock {\em Controllability of Evolution Equations, Lecture Notes Series,
  vol. 34}.
\newblock Seoul National University, Research Institute of Mathematics, Global
  Analysis Research Center, Seoul, 1996.

\bibitem{galaktionov1997continuation}
V.~A. Galaktionov and J.~L. Vazquez.
\newblock Continuation of blowup solutions of nonlinear heat equations in
  several space dimensions.
\newblock {\em Comm. Pure Appl. Math.}, 50(1):1--67, 1997.

\bibitem{guillen2013approximate}
F.~Guill{\'e}n-Gonz{\'a}lez, F.~Marques-Lopes, and M.~Rojas-Medar.
\newblock On the approximate controllability of {Stackelberg-Nash} strategies
  for stokes equations.
\newblock {\em Proc. Amer. Math. Soc.}, 141(5):1759--1773, 2013.

\bibitem{hernandez2016hierarchic}
V.~Hernandez-Santamaria, L.~de~Teresa, and A.~Poznyak.
\newblock Hierarchic control for a coupled parabolic system.
\newblock {\em Port. Math.}, 73(2):115--137, 2016.

\bibitem{DJOMEGNE2025128799}
D.~Landry and K.~Cyrille.
\newblock Hierarchical exact controllability of a parabolic equation with
  boundary controls.
\newblock {\em Journal of Mathematical Analysis and Applications},
  542(2):128799, 2025.

\bibitem{lions1994hierarchic}
J.~L. Lions.
\newblock Hierarchic control.
\newblock {\em Proc. Indian Acad. Sci. Math. Sci.}, 104(1):295--304, 1994.

\bibitem{nash1951non}
J.~Nash.
\newblock Non-cooperative games.
\newblock {\em Ann. Math.}, 54(2):286--295, 1951.

\bibitem{peral1995stability}
I.~Peral and J.~L. V{\'a}zquez.
\newblock On the stability or instability of the singular solution of the
  semilinear heat equation with exponential reaction term.
\newblock {\em Arch. Rational Mech. Anal.}, 129(3):201--224, 1995.

\bibitem{ramos2002nash}
A.~M. Ramos, R.~Glowinski, and J.~Periaux.
\newblock Nash equilibria for the multiobjective control of linear partial
  differential equations.
\newblock {\em J. Optim. Theory Appl.}, 112(3):457--498, 2002.

\bibitem{ramos2002pointwise}
A.~M. Ramos, R.~Glowinski, and J.~Periaux.
\newblock Pointwise control of the {Burgers} equation and related {Nash}
  equilibrium problems: computational approach.
\newblock {\em J. Optim. Theory Appl.}, 112(3):499--516, 2002.

\bibitem{vancostenoble2011lipschitz}
J.~Vancostenoble.
\newblock Lipschitz stability in inverse source problems for singular parabolic
  equations.
\newblock {\em Comm. Partial Differential Equations}, 36(8):1287--1317, 2011.

\bibitem{vancostenoble2008null}
J.~Vancostenoble and E.~Zuazua.
\newblock Null controllability for the heat equation with singular
  inverse-square potentials.
\newblock {\em J. Funct. Anal.}, 254(7):1864--1902, 2008.

\bibitem{vazquez2000hardy}
J.~L. Vazquez and E.~Zuazua.
\newblock The {Hardy} inequality and the asymptotic behaviour of the heat
  equation with an inverse-square potential.
\newblock {\em J. Funct. Anal.}, 173(1):103--153, 2000.

\bibitem{von1934marktform}
H.~Von~Stackelberg.
\newblock {\em Marktform und Gleichgewicht}.
\newblock Springer, 1934.

\end{thebibliography}
\end{document}